\documentclass{amsart}

\usepackage{amssymb}
\usepackage{amsthm}
\usepackage{amsmath}
\usepackage{amsbsy}
\usepackage[all]{xy}
\usepackage{bm}
\usepackage{hyperref}
\usepackage{tikz}
\usepackage{array}
\usepackage{colortbl,xcolor}
\usepackage{ytableau}
\usepackage{hhline}

\setlength\parindent{0pt}
\allowdisplaybreaks

\usepackage{thmtools}
\usepackage{thm-restate}
\usepackage{mathrsfs}

\usepackage{caption}
\captionsetup{width=1.2\textwidth}

\newtheorem{theorem}{Theorem}[section]
\newtheorem{proposition}[theorem]{Proposition}
\newtheorem{corollary}[theorem]{Corollary}

\newtheorem{conjecture}[theorem]{Conjecture}
\newtheorem{lemma}[theorem]{Lemma}

\newtheorem{question}[theorem]{Question}
\newtheorem{problem}[theorem]{Problem}

\DeclareMathOperator{\ext}{\operatorname{le}}
\DeclareMathOperator{\LE}{\mathbf{LE}}

\begin{document}

\title[]{Linear extension numbers of $n$-element posets} \keywords{Partially ordered set, linear extension number, Euclidean algorithm.}
\subjclass[2010]{06A07, 11A55}

\author[]{Noah Kravitz}
\address[]{Grace Hopper College, Yale University, New Haven, CT 06510, USA}
\email{noah.kravitz@yale.edu}
\author[]{Ashwin Sah}
\address[]{Massachusetts Institute of Technology, Cambridge, MA 02139 USA}
\email{asah@mit.edu}

\begin{abstract} 
We address the following natural but hitherto unstudied question: what are the possible linear extension numbers of an $n$-element poset?  Let $\LE(n)$ denote the set of all positive integers that arise as the number of linear extensions of some $n$-element poset.  We show that $\LE(n)$ skews towards the ``small'' end of the interval $[1,n!]$.  More specifically, $\LE(n)$ contains all of the positive integers up to $\exp\left(c\frac{n}{\log n}\right)$ for some absolute constant $c$, and $|\LE(n) \cap ((n-1)!,n!]|<(n-3)!$.  The proof of the former statement involves some intermediate number-theoretic results about the Stern-Brocot tree that are of independent interest.
\end{abstract}
\maketitle

\section{Introduction}\label{sec:introduction}

\subsection{Background and main question}

A \textit{partially ordered set} (or \textit{poset}) consists of a ground set $P$ together with a transitive, antisymmetric, and reflexive binary relation $\leq_P$ on $P$.  Posets arise naturally in many areas of math and have been studied extensively.  In this paper, we concern ourselves with posets where the base set has a finite number of elements.
\\

A \textit{linear extension} of the poset $(P,\leq_P)$ is a total ordering $\leq^{\ast}$ on $P$ which ``extends'' the relation $\leq_P$ in the sense that $x \leq_P y$ necessarily implies $x \leq^{\ast} y$.  Throughout this paper, we let $\ext(P)$ denote the number of linear extensions of the poset $P$ (with $\leq_P$ implicit).  The problem of computing $\ext(P)$ for particular posets $P$ has received significant attention in a variety of contexts; see, e.g., \cite{atkinson1985extensions,bjorner1991permutation,brightwell2003number,felsner2015linear}.  Linear extension numbers have also been studied in relation to comparability graphs, convex polytopes, and other combinatorial objects in \cite{edelman1989recurrence,stachowiak1989relation,stanley1986two}.
\\

The following straightforward recursive algorithm generates all linear extensions of a poset $P$: for each minimal element $x \in P$ (i.e., an element $x$ such that there is no $y<_P x$), take $x \leq^{\ast} z$ for all $z \in P$ and then find all linear extensions of the poset induced by $\leq_P$ on the set $P \setminus \{x\}$.  One could also proceed by choosing any pair of incomparable elements $(x,y)$ and (recursively) finding all linear extensions of the two posets obtained by adding either the relation $x \leq_P y$ or the relation $y \leq_P x$.
\\

The computation of linear extension numbers is quite difficult in general.  For instance, Brightwell and Winkler \cite{brightwell1991counting} showed that the problem of computing $\ext(P)$ for arbitrary $P$ is NP-complete.  See \cite{dittmer2018counting} for a more recent perspective.
\\

Until now, there has not been a comparable interest in the natural inverse question for linear extension numbers: for fixed $n$, which positive integers can appear as $\ext(P)$, where $|P|=n$?  In this paper, we investigate the properties of the set
\[\LE(n)=\{\ext(P): |P|=n\}\]
for various values of $n$.
\\

Trivially, $1 \leq \ext(P)\leq n!$.  As $P$ ranges over all $n$-element posets, one would expect $\ext(P)$ to take on small values more readily than large ones.  For instance, it is easy to find posets with $\ext(P)=k$ for all $1 \leq k \leq n$ (by taking $P$ to have a chain of length $n-1$), whereas we can never have $\frac{n!}{2}<\ext(P)<n!$ (since the existence of even a single nontrivial relation forces $\ext(P)\leq \frac{n!}{2}$).  Note also that $\LE(n)\subseteq\LE(n+1)$ since we can simply add an element that is smaller than all other elements.

\subsection{Main results}

Heuristically speaking, we strengthen the above observations into a nontrivial description of how $\LE(n)$ skews towards small elements of the interval $[1, n!]$. Our main result is that $n$-element posets achieve all small linear extension numbers in the following sense.

\begin{theorem}[Small linear extension numbers]\label{thm:small}
There is an absolute constant $c$ such that for all sufficiently large $n$, the set $\LE(n)$ contains all positive integers up to $\exp\left(c\frac{n}{\log n}\right)$.
\end{theorem}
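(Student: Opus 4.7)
The plan is to construct, for each positive integer $m \le \exp(cn/\log n)$, an $n$-element poset $P$ with $\ext(P)=m$. A basic reduction lets us focus on compact realizations: if $P$ has $|P|=n_0$ and $\ext(P)=m$, then the ordinal sum placing a chain of length $n-n_0$ below $P$ is an $n$-element poset with the same number of linear extensions. Rewriting $m\le\exp(cn/\log n)$ in the equivalent form $n\gtrsim\log m\cdot\log\log m$, it therefore suffices to show that every positive integer $m$ admits a realization by a poset with $O(\log m\cdot\log\log m)$ elements.

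The core of the construction uses two poset operations that mimic arithmetic on the ext-count side. The ordinal sum multiplies linear-extension counts, $\ext(P\oplus Q)=\ext(P)\ext(Q)$, at an element cost of $|P|+|Q|$. On the other hand, if $P$ has two distinct minimal elements $x_1,x_2$ (and no others), then $\ext(P)=\ext(P\setminus\{x_1\})+\ext(P\setminus\{x_2\})$, which lets us add ext-counts by carefully specifying the minimal elements of an assembled poset. Together these two operations are exactly the ingredients needed to imitate, at the poset level, the mediant operation $\frac{a}{b},\frac{c}{d}\mapsto\frac{a+c}{b+d}$ that governs the Stern--Brocot tree. The plan is to realize each target $m$ as the numerator of some Stern--Brocot fraction reached by a controlled number of mediant steps, and to translate that derivation step by step into poset gadgets glued along their minimal elements.

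The heart of the argument is then a number-theoretic lemma about the Stern--Brocot tree: for every positive integer $m$, there is a fraction with numerator $m$ reachable by a short sequence of Stern--Brocot moves, where each move is implementable as a poset gadget of bounded size. I expect the main obstacle to lie here, because the naive depth of a fraction in the Stern--Brocot tree is far too large in the worst case (the fraction $1/m$ sits at depth $m$). The key idea should be to \emph{batch} long runs of identical continued-fraction coefficients, replacing each run by a single gadget whose element cost is logarithmic rather than linear in the coefficient; this is what should account for the $\log\log m$ overhead in the stated bound. A secondary technical difficulty is to check that consecutive gadgets compose cleanly (no spurious relations are introduced between them), so that the overall poset has exactly the intended ext-count. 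Once the lemma is in place, assembling the gadgets along the Stern--Brocot path yields a poset of size $O(\log m\cdot\log\log m)$ realizing $m$, and padding with a chain finishes the proof.
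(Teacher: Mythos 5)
Your overall architecture matches the paper's: pad with a chain, reduce to realizing each $m$ by a poset of size $O(\log m \cdot \log\log m)$, and build width-$2$ gadgets whose linear extension counts follow the mediant recursion of the Stern--Brocot tree. The gap is in your key number-theoretic lemma, and it is a real one. The cost of reaching a fraction in this framework is the \emph{sum} of its continued-fraction quotients (its depth in Stern's diatomic array), one poset element per mediant step. Your proposal to ``batch'' a run of $q$ identical steps into a gadget of size $O(\log q)$ has no construction behind it, and there is strong reason to doubt one exists within this framework: if each quotient could be implemented at logarithmic cost, then the single-quotient expansion of $m/1$ would realize every $m$ with $O(\log m)$ elements, giving $M(n)=\exp(\Omega(n))$ --- which is precisely the paper's open Conjecture, not its theorem. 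So the batching idea cannot be the source of the $\log\log m$ factor; asserting it as the ``key idea'' leaves the central difficulty unresolved.

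What the paper actually does at this step is different in kind: it keeps the cost equal to the plain sum of quotients $s(m,d)$ but exploits the freedom to choose the \emph{denominator} $d$ coprime to $m$. An averaging argument (counting, over all $d$ coprime to $n$, how often large quotients can occur in the Euclidean sequences starting from $n$, via the injectivity of the continuant map $(q_1,\ldots,q_t)\mapsto(X(q_1,\ldots,q_t),X(q_1,\ldots,q_{t-1}))$ and a lattice-point count for $yb+xa=n$) shows that \emph{some} coprime $d$ has $s(n,d)=O\bigl(\tfrac{n}{\phi(n)}\log n\log\log n\bigr)$; the $\log\log$ comes from a harmonic sum, not from compressing large quotients. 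One further wrinkle you would need: because of the $n/\phi(n)$ factor, the paper applies this only to primes $p$ (where $p/\phi(p)\le 2$) and then realizes general $m$ as an ordinal sum over its prime factorization; applying the bound directly to composite $m$ costs an extra $\log\log$ and would only yield $\exp(cn/(\log n)^2)$. To complete your proof you must replace the batching claim with an existence statement of this type and actually prove it.
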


We prove this theorem by recursively constructing a family of width-$2$ posets whose linear extension numbers are related to the entries of the Stern-Brocot tree.  We then control the appearance of these entries by relating this phenomenon to certain properties of the Euclidean algorithm. We mention the following intermediate result here because it is of independent number-theoretic interest.

\begin{theorem}\label{thm:number-theoretic}
There is an absolute constant $c$ such that the following holds: for every $n\ge 2$, there exists some $1\le d\le n - 1$ relatively prime to $n$ such that when the Euclidean algorithm is run on the pair $(n, d)$, the sum of the quotients obtained is at most $c\frac{n}{\phi(n)}\log n\log\log n$, where $\phi$ denotes Euler's totient function.
\end{theorem}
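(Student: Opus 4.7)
The plan is to construct $d$ explicitly via a Diophantine approximation argument. Let $S(n,d)$ denote the sum of quotients produced by the Euclidean algorithm on $(n,d)$, equivalently the sum of partial quotients in the continued fraction of $n/d$. Fix the reciprocal golden ratio $\alpha = (\sqrt{5} - 1)/2$, whose continued fraction is $[0; 1, 1, 1, \ldots]$. The key idea is that if $d/n$ lies very close to $\alpha$, then the first many partial quotients of $n/d$ equal $1$, keeping their sum small.

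To find such a $d$ that is \emph{also} coprime to $n$, I would invoke bounds on Jacobsthal's function $g(n)$, the largest gap between consecutive integers coprime to $n$. The classical estimate $g(n) = O((n/\phi(n)) (\log n)^{O(1)})$ guarantees the existence of some $d \in [1, n-1]$ coprime to $n$ with $|d - \alpha n| \leq g(n)$. Standard bounds on continued-fraction convergents then show that the first $k \approx (1/2) \log_\phi(n/g(n)) = \Omega(\log n)$ partial quotients of $n/d$ must equal $1$ (matching those of $\alpha$), contributing only $O(\log n)$ to the sum $\sum a_i$.

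After these $k$ initial steps, the Euclidean algorithm has reduced to a smaller coprime pair $(n', d')$ with $n' \ll \sqrt{n \cdot g(n)}$, on which I would like to recurse. The main obstacle is that $d'$ is determined by the original $d$, so the inductive hypothesis on $(n', d')$ cannot be applied directly. My plan to overcome this is to observe that the Jacobsthal window around $\alpha n$ contains $\Theta(g(n) \cdot \phi(n)/n)$ distinct coprime candidates for $d$, and to argue by pigeonhole — based on how the residue $d' \bmod n'$ varies with the choice of $d$ — that at least one such $d$ produces a $d'$ in similarly favorable position relative to $n'$, e.g., with $|d' - \alpha n'|$ small. This iterability of the construction is the most delicate step.

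The recursion terminates in $O(\log \log n)$ levels since the problem size is essentially square-rooted at each step, and summing the per-level contributions (each of order $\log n_i$, with an overhead coming from the Jacobsthal bound) yields the claimed total bound $S(n, d) = O((n/\phi(n)) \log n \log \log n)$. All other ingredients — convergent-approximation estimates, elementary CF arithmetic, Jacobsthal bounds — are standard; the crux of the proof is the pigeonhole/iteration step that ensures the construction can be cascaded.
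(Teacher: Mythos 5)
Your approach (explicit construction of $d$ near $\alpha n$ via Jacobsthal's function, then recursion) is genuinely different from the paper's, which is an averaging argument: Lemma~\ref{lem:M-bound} bounds the total number of occurrences of quotients $\ge M$ over \emph{all} $d$ coprime to $n$ by counting solutions of $n=yb+xa$, and then summing $m\,r(n,m)$ up to $M=10C\frac{n}{\phi(n)}\log n$ and dividing by $|\mathcal{D}|\ge 0.9\phi(n)$ shows the \emph{average} of $s(n,d)$ over a positive proportion of coprime $d$ is $O\bigl(\frac{n}{\phi(n)}\log n\log\log n\bigr)$. Your first step is fine: Iwaniec's bound $g(n)=O((\log n)^2)$ gives a coprime $d$ with $|d-\alpha n|\le g(n)$, forcing the first $k\approx\frac12\log_\phi(n/g(n))=\Omega(\log n)$ quotients of $n/d$ to equal $1$.

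The gap is the iteration step, and I believe it is fatal rather than merely delicate. The $d$'s whose first $k$ quotients are all $1$ correspond bijectively to lattice points $(a_k,a_{k+1})$ on the line $F_{k+1}x+F_ky=n$, with consecutive points differing by $(\pm F_k,\mp F_{k+1})$ and corresponding to \emph{consecutive} integers $d$ (since $F_k^2-F_{k-1}F_{k+1}=\pm1$). Hence as $d$ ranges over your window, the quantity $a_{k+1}-\alpha a_k$ ranges over at most $O(g(n))=O((\log n)^2)$ terms of an arithmetic progression with common difference $F_{k+1}+\alpha F_k=\Theta(\phi^k)=\Theta(\sqrt{n/g(n)})$. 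So even ignoring the coprimality constraint on which window elements are admissible, the best candidate only achieves $|a_{k+1}/a_k-\alpha|=O(1/g(n))$, which buys merely $O(\log\log n)$ further unit quotients --- not a fresh Jacobsthal-quality approximation at scale $n'=a_k\approx\sqrt{ng(n)}$, which would require $|a_{k+1}-\alpha a_k|\le g(n')$, a target of density $g(n')/n'\approx(\log n)^2/\sqrt n$ that $O((\log n)^2)$ structured candidates cannot be pigeonholed into. The recursion therefore stalls after gaining $O(\log n)+O(\log\log n)+\cdots$ unit quotients while the remaining pair still has size polynomial in $n$ and uncontrolled quotient sum. A telling sanity check: if your scheme worked, it would give $s(n,d)=O(\log n)$, which is precisely Conjecture~\ref{conj:euclidean} of the paper, stated there as open; the lossy averaging in the paper's proof is exactly what lets the authors avoid the cascading problem you correctly identify as the crux.
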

We conjecture that the factor of $\frac{n}{\phi(n)}\log\log n$ can be removed.
\\

We complement Theorem~\ref{thm:small} by showing that $n$-element posets avoid most large linear extension numbers in the following sense.

\begin{theorem}[Large linear extension numbers]\label{thm:large}
For all $n\geq 8$, we have $|\LE(n) \cap ((n-1)!,n!]|<(n-3)!$.
\end{theorem}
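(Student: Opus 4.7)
The plan is to combine a structural inequality for connected posets with a counting argument.

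First, I would establish the key inequality that for any Hasse-connected poset $Q$ on $m \ge 2$ elements, $\ext(Q) \le (m-1)!$. The proof I have in mind is a cyclic-shift injection: for each linear extension $\sigma = (\sigma_1,\ldots,\sigma_m)$ and each $k \in \{0, 1, \ldots, m-1\}$, form the cyclic rotation $\sigma^{(k)} := (\sigma_{k+1},\ldots,\sigma_m,\sigma_1,\ldots,\sigma_k)$. Hasse-connectedness ensures that any nontrivial split of the ground set into a prefix and suffix is crossed by some cover relation, so $\sigma^{(k)}$ reverses some cover pair and fails to be a linear extension whenever $k \ne 0$. The map $(\sigma, k) \mapsto \sigma^{(k)}$ is then injective into $S_m$, yielding $m\cdot\ext(Q) \le m!$, with equality precisely for the ``star'' posets.

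Next, given $P$ with $\ext(P) > (n-1)!$, I would decompose its Hasse diagram as $P = I \sqcup Q_1 \sqcup \cdots \sqcup Q_k$, where $I$ consists of the isolated elements and each $Q_j$ is Hasse-connected of size $m_j \ge 2$. The interleaving identity $\ext(P) = (n!/\prod_j m_j!)\prod_j \ext(Q_j)$ together with the bound $\ext(Q_j) \le (m_j - 1)!$ turns the hypothesis $\ext(P)/n! > 1/n$ into the multiplicative constraint $\prod_j m_j < n$. In particular, $k \le \log_2 n$ and each $m_j \le n-1$.

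Finally, I would count the distinct possible values. Since $\ext(P)$ depends only on the multiset of isomorphism types $\{Q_1,\ldots,Q_k\}$, it suffices to enumerate these multisets. The constraint $\prod_j m_j < n$ leaves only polynomially many choices for the multiset of sizes. For each such choice, I would bound the number of valid connected posets of each size: when $m_j$ is close to $n-1$, the ratio $\ext(Q_j)/m_j!$ must lie in the narrow band $(1/n, 1/m_j]$, which essentially forces $Q_j$ to be a star (the equality case above); for small $m_j$, the total number of connected posets is itself small. A crude resulting bound of the form $n^{O(\log n)}$ distinct values suffices, and this is far below $(n-3)!$ for $n \ge 8$ since $(n-3)!$ grows super-exponentially. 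The base case $n = 8$ can be verified by direct enumeration, which in fact produces only a handful of values in $\LE(8) \cap (7!, 8!]$.

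The hardest step will be the quantitative counting: carefully controlling the number of ``near-star'' connected posets with $\ext(Q) > m!/n$ for moderate-to-large $m$. Crude bounds like $N_m \le 2^{\binom{m}{2}}$ are wasteful for large $m$, so extracting the needed polynomial-type bound requires using the sharp form of the cyclic-shift argument (or a direct perturbation analysis around stars) to show that only a bounded number of near-star types can beat the ratio threshold when $m$ is close to $n$. Fortunately, the target $(n-3)!$ is generous enough that the counting can afford to be coarse.
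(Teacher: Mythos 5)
Your first two steps are sound, and your cyclic-rotation proof that a Hasse-connected poset $Q$ on $m$ elements satisfies $\ext(Q)\leq (m-1)!$ is correct and arguably cleaner than the paper's argument (which passes to a spanning tree, removes a leaf, and uses a $\frac{m-1}{m}$ probability bound): if a cover edge crosses the prefix/suffix cut, it must go upward from prefix to suffix since $\sigma$ is a linear extension, so every nontrivial rotation violates that relation, and injectivity of $(\sigma,k)\mapsto\sigma^{(k)}$ follows because a nontrivial rotation of a linear extension is never a linear extension. The decomposition $P=I\sqcup Q_1\sqcup\cdots\sqcup Q_k$ and the deduction $\prod_j m_j<n$ are also exactly right and parallel the paper's Lemmas on connected components.

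The gap is in the final counting, and it is not a matter of bookkeeping: the fallback bound you implicitly rely on (count the integers $V=\prod_j\ext(Q_j)$ in the admissible window for each size profile) already fails. For the profile with a single component of size $m$ and $n-m$ isolated points, $V$ ranges over integers in $\bigl(\frac{m!}{n},\frac{m!}{m}\bigr]$, an interval of length $\frac{m!(n-m)}{mn}$; the profiles $m=n-1$ and $m=n-2$ alone contribute $\frac{(n-2)!}{n}+\frac{2(n-3)!}{n}=(n-3)!$, so the crude count overshoots the target before the remaining profiles are even added. To rescue this you would need either (i) your proposed classification of connected $m$-element posets with $\ext(Q)>\frac{m!}{n}$ for $m$ near $n$ (a stability version of the extremal lemma, which you acknowledge you have not proved, and which must be carried out for \emph{every} $m$ in a range, not just $m=n-1$), or (ii) something like the paper's actual device: Theorem~\ref{thm:big} shows $\LE(n)\cap\bigl(\frac{n!}{r+1},n!\bigr]$ is an \emph{exact} dilation of $\LE(r)\cap\bigl(\frac{r!}{r+1},r!\bigr]$, so with $r=n-1$ one only needs the crude integer count for the top slice $\bigl(\frac{(n-1)!}{n},(n-2)!\bigr]$ (contributing $\lceil\frac{(n-2)!}{n}\rceil$) and can bound everything else by $N(n-1)\leq(n-4)!$ inductively; the factor-of-two saving from replacing $\frac{2(n-3)!}{n}\approx 2(n-4)!$ by $(n-4)!$ is precisely what makes the strict inequality go through. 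Separately, your claimed $n^{O(\log n)}$ bound on the number of achieved values is a much stronger assertion than the theorem requires and is nowhere justified, and the "direct enumeration" base case would need to cover all $n$ up to wherever your asymptotic argument kicks in, not just $n=8$.
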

We prove this theorem by analyzing the connected components of Hasse diagrams.  The key insight is that an $n$-element poset with a connected Hasse diagram has at most $(n-1)!$ linear extensions.
\\

\subsection{Structure of the paper}

In Section~\ref{sec:general-constructions}, we describe general constructions for manipulating the linear extension counts of posets.  In Section~\ref{sec:nice-family}, we use these ideas to construct a specific family of posets whose linear extension numbers are connected to the Stern-Brocot tree. In Section~\ref{sec:correspondence}, we use known facts about the Stern-Brocot tree to reduce Theorem~\ref{thm:small} to Theorem~\ref{thm:number-theoretic}, which we prove in Section~\ref{sec:proofs}.  In Section~\ref{sec:large}, we prove Theorem~\ref{thm:large} and related results.  Finally, we pose some conjectures and questions in Section~\ref{sec:conclusion}.

\subsection{Conventions and notation}\label{sec:conventions}
Throughout this paper, the set of natural numbers is $\mathbb{N} = \{1, 2, \ldots\}$, and $\mathbb{N}^\ast$ denotes the set of all (possibly empty) finite sequences of natural numbers. Logarithms are natural unless otherwise specified, and $\phi$ always denotes Euler's totient function.
\\

Two elements $x$ and $y$ in a poset $P$ are \textit{comparable} if either $x \leq_P y$ or $y \leq_P x$; otherwise, these elements are \textit{incomparable}.  A \textit{chain} in a poset $P$ is a subset in which any two elements are comparable, and an \textit{antichain} is a subset in which no two distinct elements are comparable.  We write $\textbf{1}$ for the \textit{singleton} poset whose base set consists of only a single element.  For $x, y \in P$, we write $x<_P y$ if $x \leq_P y$ and $x \neq y$.  We say that $y$ \textit{covers} $x$ if $y>_P x$ and there is no $z \in P$ such that $y>_P z>_P x$.
\\

The \textit{Hasse diagram} is a useful way to visualize a poset $P$.  The Hasse diagram of $P$ consists of points corresponding to the elements of $P$, where $y$ is placed higher than $x$ whenever $y>_P x$.  If $y$ covers $x$, then we also connect the points $x$ and $y$ with a line segment.

\section{Some general poset constructions}\label{sec:general-constructions}
There are several natural ways to build larger posets out of smaller posets such that the linear extension numbers of the larger posets can be understood in terms of the linear extension numbers of the smaller posets.
\\

Given posets $(P, \leq_P)$ and $(Q, \leq_Q)$, the \textit{disjoint sum} of $P$ and $Q$, written $P+Q$, is the poset on the base set $P \cup Q$, where $x\leq_{P+Q}y$ if and only if one of the following holds:
\begin{enumerate}
    \item $x,y \in P$ and $x\leq_P y$
    \item $x,y \in Q$ and $x\leq_Q y$.
\end{enumerate}
The Hasse diagram of $P+Q$ is obtained by placing the Hasse diagrams of $P$ and $Q$ side-by-side.  It is clear that $$\ext(P+Q)=\binom{|P|+|Q|}{|P|}\ext(P)\ext(Q).$$ This is the most basic way to combine two posets.
\\

The \textit{direct sum} of $(P, \leq_P)$ and $(Q, \leq_Q)$, written $P \oplus Q$, is the poset on the base set $P \cup Q$, where $x\leq_{P \oplus Q}y$ if and only if one of the following holds:
\begin{enumerate}
    \item $x,y \in P$ and $x\leq_P y$
    \item $x,y \in Q$ and $x\leq_Q y$
    \item $x \in P$ and $y \in Q$.
\end{enumerate}
The Hasse diagram of $P \oplus Q$ is obtained by placing the Hasse diagram of $Q$ above the Hasse diagram of $P$ and connecting each maximal element of $P$ to each minimal element of $Q$.  It is clear that $$\ext(P \oplus Q)=\ext(P)\ext(Q).$$  This property is useful for constructing small posets with large composite linear extension numbers.
\\

For any two incomparable elements $x$ and $y$ of a poset $P$, let $P[x<y]$ denote the poset that is obtained from $P$ by adding the relation $x<y$.  Then, as noted above,
$$\ext(P)=\ext(P[x<y])+\ext(P[y<x]).$$
In particular, suppose $M$ is a maximal element of the poset $P$ and $m$ is a minimal element of the poset $Q$.  Then we define the $(M,m)$\textit{-direct sum} of $P$ and $Q$, written $P \oplus_{M,m} Q$, to be the poset obtained from $P \oplus Q$ by deleting the relation $M<m$.  We can now compute the number of linear extensions of this new poset:
\begin{align*}
\ext(P \oplus_{M,m} Q)&=\ext((P \oplus_{M,m} Q)[M<m])+\ext((P \oplus_{M,m} Q)[m<M])\\
 &=\ext(P \oplus Q)+\ext((P\setminus \{M\})\oplus \{m\} \oplus \{M\} \oplus (Q \setminus \{m\}))\\
  &=\ext(P)\ext(Q)+\ext(P\setminus \{M\})+\ext(Q\setminus \{m\}),
\end{align*}
where $P\setminus \{x\}$ is the restriction of the poset $(P,\leq_P)$ to the set $P \setminus \{x\}$.

\section{A nice family of posets}\label{sec:nice-family}
We now use the $(M,m)$-direct sum to construct a large family of width-$2$ posets whose linear extension numbers we can control.  We will write this family as $\{P_b\}$, where $b$ ranges over the dyadic rationals in the interval $[0,1)$.  (In this section, all decimals are in base $2$.)  We construct this family recursively based on the number of digits needed to express $b$.  If $b$ requires $n$ digits (including the digit before the decimal point), then $|P_b|=n$.

\subsection{The construction}
For $n\geq 2$, each poset $P_b$ in the $n$-th stage gives rise to the posets $P_{b-2^{-n}}$ and $P_{b+2^{-n}}$ in the $n+1$-th stage.  Each of these posets is obtained from $P_b$ as the $(M,m)$-direct sum of the singleton poset $\textbf{1}$ with $P_b$ .  So each new poset looks like a copy of $P_b$ with a new minimal element added to one of the chains.
\\

For each $P_b$, we keep track of which elements are in which chain.  We will let $L_b$ and $R_b$ denote the minimal elements of the left and right chains, respectively.  We present each step of the construction ``in symbols'' and then ``in words.''
\\

The $n=1$ stage consists of just the poset $P_0$, which we define to be the singleton
$$P_0=\{R_0\}.$$
Note that there is no $L_0$ since $P_0$ contains only one element.
\\

The $n=2$ stage consists of just the poset $P_{0.1}$, which we define to be
$$P_{0.1}=\{L_{0.1}, R_{0.1}\}, \quad \text{with} \quad L_{0.1}<R_{0.1}.$$
Note that even though $P_{0.1}$ is itself a chain, we have identified the two non-maximal chains $\{L_{0.1}\}$ and $\{R_{0.1}\}$.
\\

We now explain the recursive part of the construction.  For $n \geq 2$, fix any $P_b$ in the $n$-th stage of the construction.  Define
$$P_{b-2^{-n}}=\{x\} \oplus_{x, L_b} P_b,$$
and re-label $x$ as $R_{b-2^{-n}}$ and $L_b$ as $L_{b-2^{-n}}$.  In words: we add a new minimal element to $P_b$ below $R_b$ which is smaller than everything except $L_b$.
\\

In order to construct $P_{b+2^{-n}}$, we consider two cases for $b$.  If $b$ is not of the form $2^{1-n}$, then (just as before) define
$$P_{b+2^{-n}}=\{x\} \oplus_{x, R_b} P_b,$$
and re-label $x$ as $L_{b+2^{-n}}$ and $R_b$ as $R_{b+2^{-n}}$.  In words: we add a new minimal element to $P_b$ below $L_b$ which is smaller than everything except $R_b$.  If $b=2^{1-n}$, then this construction does not work because $R_b$ is not minimal and the $(x, R_b)$-direct sum is not defined.  In this case, we define the analogous construction manually: let
$$P_{b+2^{-n}}=\{x\} \oplus P_b,$$
and re-label $x$ as $L_{b+2^{-n}}$ and $R_b$ as $R_{b+2^{-n}}$.  In words: we try to add a new minimal element to $P_b$ below $L_b$ which is smaller than everything except $R_b$, but this new element is automatically smaller than $R_b$ because $R_b$ was not minimal.  Figure~\ref{fig:big} shows the first few stages of this construction.
\\

\begin{figure}[p]
\begin{center} 
\includegraphics[height=17.5cm]{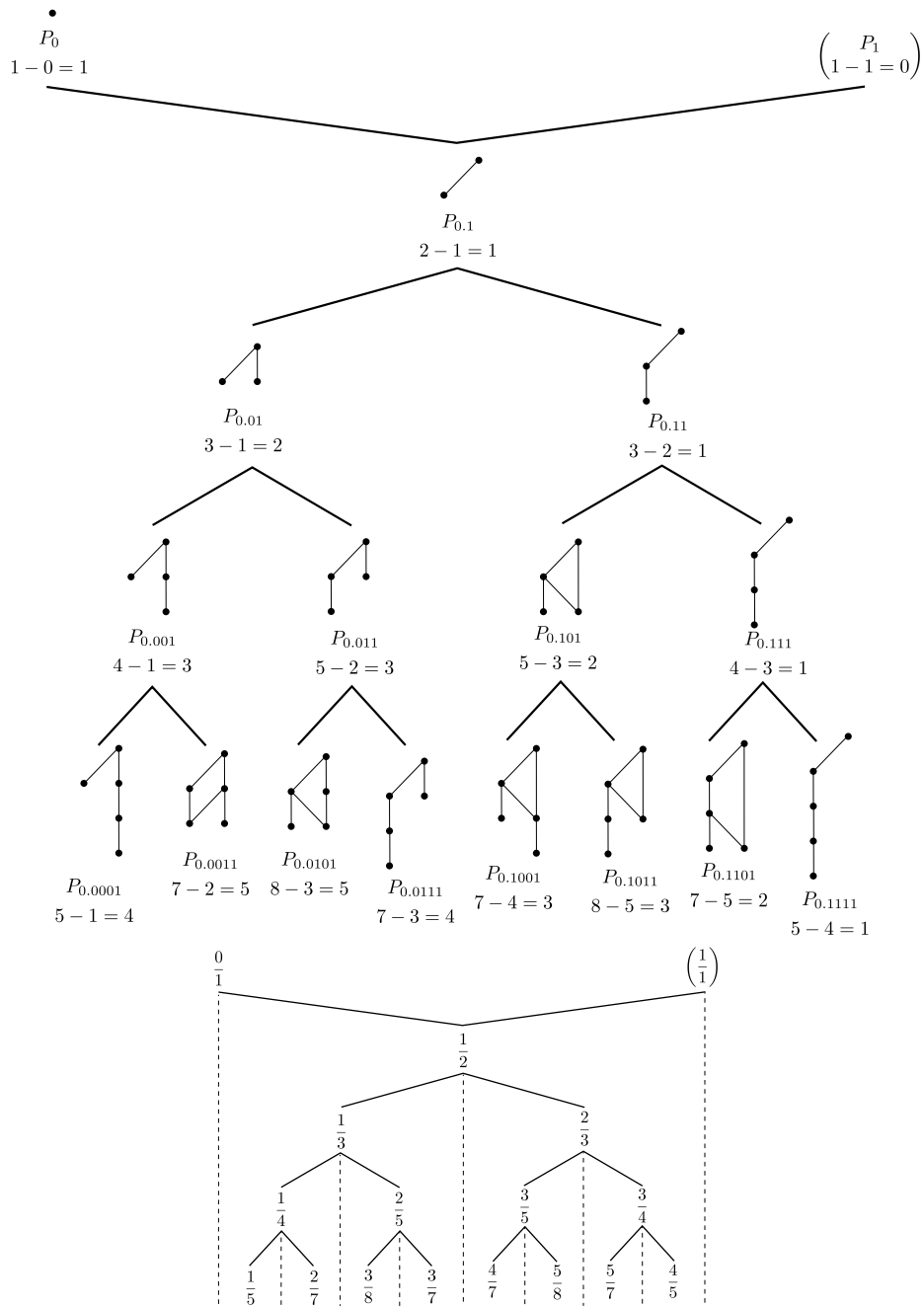}
\end{center}
\caption{The first $5$ stages of the construction of the posets $\{P_b\}$ (above) and the corresponding section of the Stern-Brocot tree (below).  Each $\ext(P_b)$ is written as the difference of the denominator and numerator of the corresponding entry in the Stern-Brocot tree.  We indicate where $P_1$ would fit if it were defined.  In order to emphasize the width-$2$ nature of these posets, we include more edges than are, strictly speaking, required for the Hasse diagrams.}
\label{fig:big}
\end{figure}

We remark that $P_{0.1}$ can be obtained from $P_0$ as
$$P_{0.1}=\{x\} \oplus P_0,$$
after re-labeling $x$ as $L_{0.1}$ and $R_0$ as $R_{0.1}$.  We included $P_{0.1}$ in the base case because $P_0$ does not give rise to a second poset in the $n=2$ stage.

\subsection{Tree structure}
The following lemma relates the linear extension numbers of posets in later stages to the linear extension numbers of posets in earlier stages.

\begin{proposition}\label{prop:recursion}
Let $b$ be a dyadic rational in the range $(0,1)$.  If $P_b$ first appears in the $n$-th stage, i.e., $b=0.b_2b_3\cdots b_{n-1}1$, then
$$\ext(P_b)=\ext(P_{b-2^{1-n}})+\ext(P_{b+2^{1-n}}),$$
where we convene to let $\ext(P_1)=0$.
\end{proposition}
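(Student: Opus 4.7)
The plan is to prove the recursion by strong induction on the stage $n$, strengthening the hypothesis with two companion identities that track the effect of deleting a chain-bottom element:
\[\ext(P_b\setminus\{L_b\})=\ext(P_{b-2^{1-n}}),\]
and, whenever $R_b$ is minimal in $P_b$, $\ext(P_b\setminus\{R_b\})=\ext(P_{b+2^{1-n}})$. Alongside these, I would carry a structural claim: $L_b$ is always minimal in $P_b$, while $R_b$ is minimal iff $b\neq 1-2^{1-n}$, i.e.\ iff $P_b$ is not a chain (which is also precisely when the special construction $\{x\}\oplus P_{b_p}$ is invoked in producing the next stage). Once these are set up, the main recursion falls out by adding the two companion identities: whenever both $L_b$ and $R_b$ are minimal, every linear extension of $P_b$ begins with one of them.

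For the inductive step at stage $n+1$, I would write $b=b_p\pm 2^{-n}$ with $b_p$ the parent at stage $n$. In the direction $b=b_p-2^{-n}$ (never special), the construction $P_b=\{x\}\oplus_{x,L_{b_p}}P_{b_p}$ makes $P_b\setminus\{R_b\}=P_b\setminus\{x\}=P_{b_p}$, while $P_b\setminus\{L_b\}=\{x\}\oplus(P_{b_p}\setminus\{L_{b_p}\})$, since the omitted relation $x<L_{b_p}$ becomes moot once $L_{b_p}$ is deleted. Passing to linear-extension counts and invoking the inductive companion identity for $P_{b_p}\setminus\{L_{b_p}\}$ at stage $n$ produces both companion identities at stage $n+1$, and their sum is the desired recursion. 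The symmetric subcase $b=b_p+2^{-n}$ with $R_{b_p}$ minimal is essentially identical.

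The only remaining subcase -- and the point I expect to be the main obstacle -- is $b=b_p+2^{-n}$ with $R_{b_p}$ not minimal. The first step is to verify that this happens precisely when $b_p=1-2^{1-n}$, so that $P_{b_p}$ is already a chain and $P_b=\{x\}\oplus P_{b_p}$ is itself a chain of length one greater, yielding $\ext(P_b)=\ext(P_{b_p})$. Then $b=1-2^{-n}=1-2^{1-(n+1)}$ lies in the chain case at stage $n+1$, and because $b+2^{-n}=1$, the convention $\ext(P_1)=0$ makes the right-hand side collapse to $\ext(P_{b_p})+0=\ext(P_b)$ exactly as required. The base case $n=2$, with $b=1/2$ and $\ext(P_0)+\ext(P_1)=1+0=1=\ext(P_{1/2})$, is immediate, so this closes the induction.
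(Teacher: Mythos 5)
Your proof is correct and rests on the same key fact as the paper's: your companion identity $\ext(P_b\setminus\{R_b\})=\ext(P_{b+2^{1-n}})$ is precisely the paper's identity \eqref{eq:tree}, which the paper re-derives via inner inductions on runs of trailing digits inside a four-case analysis of the binary expansion of $b$, whereas you carry it (together with its left-hand twin $\ext(P_b\setminus\{L_b\})=\ext(P_{b-2^{1-n}})$) explicitly as a strengthened hypothesis in a single induction on the stage, and then sum the two via the minimal-element decomposition. The substance is essentially the same and your organization is cleaner; note also that you have correctly identified the special case of the construction as $b=1-2^{1-n}$ (the chain), which is what the recursion and Figure~\ref{fig:big} require, even though the paper's text states the condition as $b=2^{1-n}$.
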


It is easy to convince oneself of the truth of this statement by scrutinizing Figure~\ref{fig:big}: for a given poset $P_b$, consider the two cases that arise from adding a relation between $L_b$ and $R_b$, then follow the recurrence ``up'' the tree by eliminating new minimal elements one-by-one.  For the sake of completeness, we provide a technical, symbol-heavy proof as well.

\begin{proof}
Consider the following four cases for $b$:
\begin{enumerate}
    \item $b=0.11\cdots 1=1-2^{1-n}$.
     \item $b=0.b_2 \cdots b_{n-k-1}011 \cdots 1$ (where $b$ ends with $k\geq 2$ $1$'s, preceded by a $0$).
     \item $b=0.00\cdots 01=2^{1-n}$.
    \item $b=0.b_2 \cdots b_{n-k-2}100\cdots 01$ (where $b$ ends with a $1$, preceded by exactly $1 \leq k \leq n-3$ $0$'s).
\end{enumerate}

First, consider $b=0.1\cdots 1=1-2^{1-n}$.  By easy induction on $n$, we have
$$P_{1-2^{1-n}}=\{a_1\} \oplus \cdots \oplus \{a_n\},$$
which immediately implies that
$$\ext(P_{1-2^{1-n}})=1$$
for all $n \geq 1$.  Finally, for all $n \geq 2$, we have
$$\ext(P_{b-2^{1-n}})+\ext(P_{b+2^{1-n}})=\ext(P_{1-2^{2-n}})+\ext(P_1)=1+0=\ext(P_b),$$
as desired.
\\

Second, consider $b=0.b_2 \cdots b_{n-k-1}011 \cdots 1$ (where $b$ ends with $k\geq 2$ $1$'s, preceded by a $0$).  We proceed by induction on $k$.  We begin with the base case $k=2$, i.e., $b=0.b_2 \cdots b_{n-3}011$.  Let $b''=b+2^{1-n}=0.b_2 \cdots b_{n-3}1$.  In the recursive construction, $P_{b''}$ gives rise to $P_{b'}$, where $b'=b''-2^{2-n}=0.b_2 \cdots b_{n-3}01$, via
$$P_{b'}=\{R_{b'}\} \oplus_{R_{b'}, L_{b''}} P_{b''}.$$
In turn, $P_{b'}$ gives rise to $P_b$, where $b=b'+2^{1-n}$, via
$$P_{b}=\{L_{b}\} \oplus_{L_{b}, R_{b'}} P_{b'}.$$
We now compute
\begin{align*}
    \ext(P_b) &=\ext(\{L_{b}\})\ext(P_{b'})+\ext(\emptyset)\ext(P_{b'} \setminus \{R_{b'}\})\\
     &=\ext(P_{b'})+\ext(P_{b''})\\
     &=\ext(P_{b-2^{1-n}})+\ext(P_{b+2^{1-n}}),
\end{align*}
as desired.  Next, we perform the induction step.  Consider $b=0.b_2 \cdots b_{n-k-1}011 \cdots 1$, where now $k \geq 3$.  Let $b'=b-2^{1-n}$ (deleting the last $1$ in the decimal expansion of $b$) and $b''=b'-2^{2-n}$ (deleting the penultimate $1$, as well).  As before,
$$P_{b'}=\{L_{b'}\} \oplus_{L_{b'}, R_{b''}} P_{b''}$$
gives
$$\ext(P_{b'})=\ext(P_{b''})+\ext(P_{b''}\setminus \{R_{b''}\}).$$
By the induction hypothesis, we also have
$$\ext(P_{b'})=\ext(P_{b''})+\ext(P_{b'+2^{2-n}}),$$
so that
\begin{equation}\label{eq:tree}
    \ext(P_{b''}\setminus \{R_{b''}\})=\ext(P_{b'+2^{2-n}}).
\end{equation}
Next, consider
$$P_{b}=\{L_{b}\} \oplus_{L_{b}, R_{b'}} P_{b'},$$
which gives
$$\ext(P_{b})=\ext(P_{b'})+\ext(P_{b'}\setminus \{R_{b'}\}).$$
Recall that in $P_{b'}$, the element $L_{b'}$ is smaller than everything except for $R_{b'}$ (which is identified with $R_{b''}$ in $P_{b''}$).  Thus, we see that
$$P_{b'}\setminus \{R_{b'}\}=\{L_{b'}\} \oplus (P_{b''}\setminus \{R_{b''}\}),$$
which of course means that
$$\ext(P_{b'}\setminus \{R_{b'}\})=\ext(P_{b''}\setminus \{R_{b''}\}).$$
Finally, substituting from \eqref{eq:tree} gives
$$\ext(P_{b})=\ext(P_{b'})+\ext(P_{b'+2^{2-n}}).$$
This is the desired expression once one recalls that $b'=b-2^{1-n}$ and $b'+2^{2-n}=b+2^{1-n}$.
\\

The remaining two cases are completely analogous to the second case.
\end{proof}

\section{From posets to the Euclidean algorithm and back}\label{sec:correspondence}

At this point, the reader may recognize the structure of the (left half of the) Stern-Brocot tree in Figure~\ref{fig:big}.  We will exploit this connection in order to apply number-theoretic results about the Stern-Brocot tree to our problem about linear extension numbers.

\subsection{The appearance of denominators in the Stern-Brocot tree}
The Stern-Brocot tree is a complete binary plane tree that contains each positive rational exactly once: the rationals less than $1$ appear on the left half, and the rationals greater than $1$ appear on the right half.  (The entry $\frac{1}{0}$, representing infinity, also appears in the right half.)  The Stern-Brocot tree is closely related to the Calkin-Wilf tree, Stern's diatomic array, and Farey sequences: for instance, the Calkin-Wilf tree is obtained by permuting the entries in each layer of the Stern-Brocot tree \cite{calkin2000recounting}, and the denominators of the first $\ell$ layers of the left half of the Stern-Brocot tree are exactly the elements of the $\ell$-th line of Stern's diatomic array \cite{lehmer1929stern}.
\\

The entries of the left half of the Stern-Brocot tree can be parameterized in the obvious way by $\left\{ \frac{s_b}{t_b} \right\}$, where $b$ ranges over all dyadic rationals in $[0,1]$.  More precisely, the entries in the $\ell$-th layer are parametrized from left to right by the dyadic rationals in $[0,1]$ that require exactly $\ell$ digits in their decimal expansions.  This parameterization can be extended in the natural way to the entire Stern-Brocot tree by letting $b$ range over the dyadic rationals in $[0,2]$.
\\

It is well known \cite{aiylam2017generalized} that each layer of the Stern-Brocot tree is obtained by taking mediants of all consecutive terms of the entries in earlier layers; moreover, these mediants are already reduced fractions.  In other words, both the numerators and denominators in the Stern-Brocot tree individually follow the recursive structure of Proposition~\ref{prop:recursion}.  Since $\ext(P_0)=t_0-s_0$ and $\ext(P_1)=t_1-s_1$ (as defined), it then follows that $\ext(P_b)=t_b-s_b$ for all dyadic rationals $b$ in $[0,1]$.
\\

By the definition of the Calkin-Wilf tree (see \cite{calkin2000recounting}), each entry $\frac{s}{t}$ in the $\ell$-th layer has $\frac{s}{s+t}$ as its left child in the $\ell+1$-th layer.  Recall that the Calkin-Wilf and Stern-Brocot trees differ only in the ordering of elements within each layer.  These facts together imply that for every denominator $d$ in the $\ell$-th layer of the Stern-Brocot tree, there is some $\frac{s_b}{t_b}$ (with $s_b<t_b$, i.e., in the left half) in the $\ell+1$-th layer such that $t_b-s_b=d$.  In particular, every denominator $d$ in the $\ell-1$-th layer of the Stern-Brocot tree appears as the number of linear extensions of some $P_b$ on $\ell$ elements.
\\

It is also known \cite{lehmer1929stern} that any relatively prime positive integers $n,d$ appear consecutively somewhere in Stern's diatomic array.  In particular, the entries $n,d$ appear consecutively in the $\ell-1$-th line of Stern's diatomic array, where $\ell$ equals the sum of the quotients that appear in the expansion of $\frac{n}{d}$ as a continued fraction.  Recall that the sum of the quotients in the continued fraction expansion of $\frac{n}{d}$ is simply the sum of the quotients obtained in running the Euclidean algorithm on the pair $(n,d)$.  We summarize all of these observations in the following proposition.

\begin{proposition}\label{prop:connection}
Let $d<n$ be relatively prime positive integers, and let $\ell$ denote the sum of the quotients obtained when the Euclidean algorithm is run on the pair $(n,d)$.  Then there is some $P_b$ on at most $\ell$ elements with exactly $n$ linear extensions.
\end{proposition}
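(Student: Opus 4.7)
The plan is to chain together the three correspondences already developed earlier in this section. First, Proposition~\ref{prop:recursion} together with the mediant structure of the Stern-Brocot tree and the base values $\ext(P_0) = 1 = t_0 - s_0$ and $\ext(P_1) = 0 = t_1 - s_1$ gives $\ext(P_b) = t_b - s_b$ for every dyadic $b \in [0,1]$. Second, the denominators of the first $\ell - 1$ layers of the left half of the Stern-Brocot tree are exactly the entries of the $(\ell-1)$-st line of Stern's diatomic array. Third, the Calkin-Wilf left-child rule $s/t \mapsto s/(s+t)$, combined with the fact that the Stern-Brocot and Calkin-Wilf trees share each layer's multiset of denominators, shows that every denominator $d$ appearing in layer $k$ of the left Stern-Brocot tree also appears as $t_b - s_b$ for some entry $s_b/t_b$ (with $s_b < t_b$) in layer $k+1$.

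Given coprime $d < n$ with Euclidean quotient sum $\ell$, I would invoke Lehmer's theorem to place $n$ and $d$ consecutively in the $(\ell-1)$-st line of Stern's diatomic array. In particular, $n$ occurs in that line, so by the second correspondence $n$ is a denominator of some entry in layer $k$ of the left Stern-Brocot tree for some $1 \le k \le \ell-1$. Applying the third correspondence then produces an entry $s_b/t_b$ in layer $k+1$ with $t_b - s_b = n$, and the first correspondence converts this into $\ext(P_b) = n$. Since $b$ parametrizes an entry in layer $k+1$, its binary expansion has exactly $k+1$ digits, so $|P_b| = k+1 \le \ell$, as required.

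The only real obstacle is the bookkeeping: one must carefully track the off-by-one indexing between ``layer of the Stern-Brocot tree,'' ``line of Stern's diatomic array,'' and ``number of binary digits of the dyadic parameter $b$'' (equivalently $|P_b|$). Once these indices are synchronized, the proposition follows immediately from the pieces already assembled, so no new ideas should be needed in the actual write-up.
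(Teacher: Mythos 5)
Your proposal is correct and follows essentially the same route as the paper, which presents Proposition~\ref{prop:connection} precisely as the summary of the three correspondences you chain together (the identity $\ext(P_b)=t_b-s_b$, the Lehmer diatomic-array placement of the coprime pair $(n,d)$, and the Calkin--Wilf left-child rule converting a layer-$k$ denominator into a layer-$(k+1)$ difference $t_b-s_b$). The index bookkeeping you flag also matches the paper's: a denominator in layer $k\le\ell-1$ yields a poset on $k+1\le\ell$ elements.
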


\subsection{Connecting Theorems \ref{thm:small} and \ref{thm:number-theoretic}}

We now use the ideas of the previous subsection to show how Theorem~\ref{thm:number-theoretic} (on sums of quotients in the Euclidean algorithm) implies Theorem~\ref{thm:small} (on small linear extension numbers).

\begin{proof}[Proof (Theorem~\ref{thm:number-theoretic} implies Theorem~\ref{thm:small})]
Recall that Theorem~\ref{thm:number-theoretic} asserts the existence of an absolute constant $c$ such that for every $n \geq 2$, there exists some $1 \leq d \leq n$ such that the sum of the quotients obtained in running the Euclidean algorithm on $(n,d)$ is at most $c \frac{n}{\phi(n)}\log n \log \log n$.  When $n=p$ is prime, we have $\frac{p}{\phi(p)}=\frac{p}{p-1}\leq 2$, so in this case the sum of quotients is at most $2c \log p \log \log p$.  Next, Proposition~\ref{prop:connection} tells us that for every prime $p$, there exists a poset $P_b$ of size at most $2c \log p \log \log p$ that has exactly $p$ linear extensions.
\\

Let $m \geq 2$ have prime factorization $m=p_1\ldots p_r$ (written with multiplicity).  For each $p_j$, let $P_{b_j}$ be the poset with exactly $p_j$ linear extensions from the previous paragraph.  Then the direct sum of the $p_j$'s has exactly $m$ linear extensions and size at most
$$\sum_{j=1}^r 2c \log p_j \log \log p_j \leq 2c\log \log m \sum_{j=1}^r \log p_j=2c\log \log m \log m.$$

Now, fix any positive integer $n$.  Every positive integer $m$ with $$2c\log m \log \log m\leq n$$ is obtained as the number of linear extensions of some poset on at most $n$ elements.  Moreover, because the sets $\{\LE(n)\}$ form an ascending chain (by taking the direct sum with a singleton), there is in fact an $n$-element poset that achieves each of these linear extension numbers $m$.  Finally, for $m \leq \exp(\frac{1}{4c} \frac{n}{\log n})$, we compute:
\begin{align*}
    2c\log m \log \log m &\leq 2c \left( \frac{1}{4c} \frac{n}{\log n} \right)\left( \log n-\log \log n-\log(4c) \right)\\
     &=\frac{n}{2}+O\left( \frac{n \log \log n}{\log n} \right).
\end{align*}
This quantity is smaller than $n$ for sufficiently large $n$, which suffices to establish Theorem~\ref{thm:small}.
\end{proof}

\section{Bounding the sum of quotients}\label{sec:proofs}
\subsection{Basic number-theoretic notions}\label{sec:basic-number-theory}
Given positive integers $n$ and $d$, the \emph{Euclidean algorithm} is defined as follows: let $a_0 = n$ and $a_1 = d$, then, for $k\ge 1$, recursively define $a_{k + 1}$ to be the remainder when $a_{k - 1}$ is divided by $a_k$, unless $a_k = 0$, in which case the algorithm terminates. As long as $a_{k+1}$ is defined, let $q_k$ be the unique positive integer which satisfies
\[a_{k - 1} = q_ka_k + a_{k + 1}.\]
It is well known that the Euclidean algorithm always terminates with some $a_\ell = 0$.  We now introduce some notation.
\begin{itemize}
    \item Let $\textbf{a}(n, d)$ denote the \textit{Euclidean sequence} $a_0, \ldots, a_{\ell - 1}$ of nonzero values generated by the Euclidean algorithm applied to $n$ and $d$.
    \item Let $\textbf{q}(n, d)$ denote the \textit{quotient sequence} $q_1, \ldots, q_{\ell - 1}$ generated by the Euclidean algorithm applied to $n$ and $d$. Note that $\textbf{q}(n, d)$ can be computed directly from $\textbf{a}(n, d)$ via $q_i=\left\lfloor\frac{a_{i-1}}{a_i}\right\rfloor$.
    \item Let $s(n, d)$ denote the sum of the elements of the sequence $\textbf{q}(n, d)$.
    \item For integers $1\le m\le n$, let $r(n, m)$ denote the total number of occurrences of $m$ in the sequences $\textbf{q}(n, d)$ as $d$ ranges over the elements of $\{1, \ldots, n - 1\}$ that are relatively prime to $n$.
\end{itemize}

Next, we define the function $X: \mathbb{N}^\ast\rightarrow\mathbb{N}$ by the following recursive rules:
\begin{align*}
X() &= 1,\\
X(q) &= q,\\
X(q_1, \ldots, q_t) &= q_1X(q_2, \ldots, q_t) + X(q_3, \ldots, q_t).
\end{align*}
We can easily see that, for any integers $q_1, \ldots, q_t$ all at least $1$, the following statements are equivalent: $\textbf{q}(n, d)=(q_1, \ldots, q_t)$ and $\textbf{a}(n, d)$ ends in $a_t=1$; and $n = X(q_1, \ldots, q_t)$ and $d = X(q_2, \ldots, q_t)$.  The pair $(n, d)$ can be used to reconstruct $(q_1, \ldots, q_t)$, so the map $\chi: \mathbb{N}^\ast\rightarrow\mathbb{N}^2$ defined via
\[\chi(q_1, \ldots, q_t) = (X(q_1, \ldots, q_t), X(q_2, \ldots, q_t))\]
is injective. It is well-known (see, e.g., the continued fractions interpretation in \cite[Chapter~4, Identity~109]{benjamin2003proofs}) that $X(q_1, \ldots, q_t) = X(q_t, \ldots, q_1)$. Thus, the map $\chi': \mathbb{N}^\ast\rightarrow\mathbb{N}^2$ defined by
\[\chi'(q_1, \ldots, q_t) = (X(q_1, \ldots, q_t), X(q_1, \ldots, q_{t - 1}))\]
is also injective. Note that the images of both $\chi$ and $\chi'$ consist of pairs of relatively prime positive integers.  Also, if $(y,x)$ is in the image of $\chi'$, then $y \geq x$.

\subsection{Proofs}\label{sec:main}
Before we prove Theorem~\ref{thm:number-theoretic} (on sums of quotients), we require a bound on the number of occurrences of large integers in the Euclidean sequences that start with $n$.
\begin{lemma}\label{lem:M-bound}
Fix any constant $K$. Then there is a constant $C = C(K)$ such that if $n\ge 2$ and $1\le M\le K(\log n)^2$, then
\[\sum_{m = M}^n r(n, m)\le\frac{Cn\log n}{M}.\]
\end{lemma}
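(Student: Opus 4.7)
I would start from the continued-fraction interpretation developed in Section~\ref{sec:basic-number-theory}. Each pair $(d, j)$ with $\gcd(d, n) = 1$ and $q_j(n, d) \geq M$ corresponds to a decomposition of $\mathbf{q}(n, d) = (q_1, \ldots, q_t)$ as a concatenation $(P, m, S)$ where $P = (q_1, \ldots, q_{j - 1})$, $m = q_j \geq M$, and $S = (q_{j + 1}, \ldots, q_t)$. Applying the standard matrix-product identity for continued fractions (or unfolding the recurrence for $X$) gives
\[n = (m\alpha + \alpha')\, b + \alpha c,\]
where $\alpha := X(P)$, $\alpha' := X(q_1, \ldots, q_{j-2})$, $b := X(S)$, $c := X(q_{j+2}, \ldots, q_t)$, with the natural conventions $\alpha' = 0$ when $j = 1$ and $c = 0$ when $j = t$. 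Setting $\gamma := m\alpha + \alpha'$, this becomes $n = \gamma b + \alpha c$ with $\alpha \geq 1$, $b \geq 1$, $0 \leq c < b$, $\gamma \geq M\alpha$, and $\gcd(\alpha, \gamma) = \gcd(b, c) = 1$. Up to the $(q_t) \leftrightarrow (q_t - 1, 1)$ continued-fraction ambiguity at the boundaries of $P$ and $S$, which costs only an $O(1)$ multiplicative factor, $\sum_{m = M}^{n} r(n, m)$ is then bounded by the number of such quadruples $(\alpha, \gamma, b, c)$.

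For fixed coprime $(\alpha, \gamma)$ with $\gamma \geq M\alpha$, the equation $n = \gamma b + \alpha c$ with $b \geq 1$ and $0 \leq c < b$ confines $b$ to the interval $(n/(\gamma + \alpha), n/\gamma]$ in a single residue class modulo $\alpha$, so there are at most $\lfloor n/(\gamma(\gamma + \alpha)) \rfloor + 1$ solutions. The main contribution telescopes cleanly: for each $\alpha \geq 1$,
\[\sum_{\gamma \geq M\alpha} \frac{1}{\gamma(\gamma + \alpha)} = \frac{1}{\alpha}\sum_{k = 0}^{\alpha - 1} \frac{1}{M\alpha + k} \leq \frac{1}{M\alpha},\]
so summing the $n/(\gamma(\gamma+\alpha))$ contributions over $\alpha \leq n/M$ gives $O\big(\frac{n}{M} \log(n/M)\big) = O\big(\frac{n \log n}{M}\big)$, matching the target.

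The delicate task is bounding the ``$+1$'' contribution, i.e.\ counting the coprime pairs $(\alpha, \gamma)$ that admit at least one valid $(b, c)$. I would reparametrize by $(b, c)$: for each such pair with $\gcd(b, c) = 1$ and $1 \leq c < b \leq n/M$, the admissible $\alpha$ satisfies $\alpha \equiv n c^{-1} \pmod b$ and $\alpha \leq n/(Mb + c)$, so existence hinges on whether the minimal such $\alpha_0 \in [1, b]$ satisfies $\alpha_0 \leq n/(Mb + c)$. For $b \leq \sqrt{n/M}$ this is automatic (since $\alpha_0 \leq b$), giving only $O(n/M)$ extra pairs via a trivial count. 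For $b > \sqrt{n/M}$, I would invoke equidistribution: when $\gcd(n, b) = 1$, multiplication by $n$ is a bijection on $(\mathbb{Z}/b)^*$, so the number of valid $c$ equals the number of integers in $[1, n/(Mb)]$ coprime to $b$, which by M\"obius inversion is $\frac{\phi(b)}{b}\cdot \frac{n}{Mb} + O(d(b))$. Summing over $b \leq n/M$, the main term yields $O(n \log n / M)$ and the error $\sum_{b \leq n/M} d(b) = O\big(\frac{n}{M}\log(n/M)\big)$ is also $O(n \log n / M)$. The case $\gcd(n, b) > 1$ is handled by splitting off the common factor and applying the same argument on the reduced modulus.

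The main obstacle will be this equidistribution/divisor-sum step—in particular, controlling all the $O(d(b))$ error terms uniformly and handling the non-coprime case $\gcd(n, b) > 1$ without losing a $\log$ factor. I expect this is where the constant $C = C(K)$ acquires its dependence on $K$, and where the hypothesis $M \leq K(\log n)^2$ simplifies matters, since for much larger $M$ the ``$+1$'' term would demand a more delicate treatment.
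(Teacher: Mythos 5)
Your reduction is sound and, after a linear change of variables, lands on exactly the Diophantine counting problem the paper solves. Splitting $\mathbf{q}(n,d)$ as $(P,m,S)$ around the large quotient and applying the continuant identity gives $n=\gamma b+\alpha c$ with $\gamma\ge M\alpha$; the paper instead splits the Euclidean sequence at the consecutive pair $(a_{j-1},a_j)$ with $a_{j-1}/a_j\ge M$ and arrives at $n=yb'+xa'$ with $b'/a'\ge M$. These are equivalent coordinates on the same solution set (namely $b'=mb+c$, $a'=b$, $y=\alpha$, $x=\alpha'$), and both arguments then perform a two-range lattice count. Your telescoping bound for the main term and the count $n/(\gamma(\gamma+\alpha))+1$ per progression are correct. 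The genuine divergence is in the ``$+1$'' contribution for large $b$: the paper stays elementary --- it fixes the prefix pair, counts the suffix pair, and uses $M\le K(\log n)^2$ to absorb a resulting $\approx n/\log n$ term (the only place that hypothesis enters) --- whereas you invoke equidistribution of $nc^{-1}\bmod b$.

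That equidistribution step is where your proposal has a real gap, and you correctly flagged it. When $\gcd(n,b)=g>1$, reducing to the modulus $b_1=b/g$ means each admissible residue mod $b_1$ is hit by $\phi(b)/\phi(b_1)\le g$ values of $c$, so the M\"obius-inversion error inflates from $O(d(b_1))$ to $O(g\,d(b_1))$ per modulus; summing over $b\le n/M$ the natural bound is $O\bigl(d(n)\cdot\tfrac{n}{M}\log\tfrac{n}{M}\bigr)$, and $d(n)$ can be as large as $\exp\bigl(\Theta(\log n/\log\log n)\bigr)$, which exceeds every power of $\log n$. So ``splitting off the common factor and applying the same argument'' does not recover $Cn\log n/M$. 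The step is repairable, but not by M\"obius inversion: inject the valid $c$ into pairs $(u,c)$ with $uc\equiv n\pmod b$ and $1\le u\le n/(Mb)$; every such $u$ must be a multiple of $g$ (at most $n/(Mbg)$ choices) and each contributes at most $\phi(b)/\phi(b_1)\le g$ values of $c$, so the count is at most $n/(Mb)$ with no error term at all. With that replacement --- plus the minor corrections that the small-$b$ range contributes $O\bigl(\tfrac{n}{M}\log\tfrac{n}{M}\bigr)$ rather than $O(n/M)$ once the multiplicity of $\alpha$ per pair $(b,c)$ is included, and that the degenerate cases $j=t$ (i.e.\ $c=0$, $b=1$) need their separate $O(n/M)$ accounting --- your argument closes, and in fact proves the lemma with no upper restriction on $M$, which is slightly stronger than what the paper's route gives.
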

\begin{proof}
Fix $n$ and $M$. Note that
\[\sum_{m = M}^n r(n, m)\]
counts the triples of positive integers $(a, b, d)$ where $\gcd(n, d) = 1$, $b$ and $a$ appear consecutively (in that order) in the Euclidean sequence $\textbf{a}(n, d)$, and $\frac{b}{a}\ge M$. Let $\mathcal{T}$ denote the set of these triples.  We will count the elements of $\mathcal{T}$ by looking at each possible pair $(a,b)$ separately and counting the values of $d$ such that $(a,b,d) \in \mathcal{T}$.  Note that any such pair $(a,b)$ satisfies $1 \leq a,b \leq n$ and $\frac{b}{a} \geq M$.  Moreover, $\gcd(a,b)=1$ since $\gcd(n,d)=1$.
\\

Fix a pair $(a, b)$ as described in the previous paragraph.  The choices for $d$ such that $(a, b, d)\in\mathcal{T}$ correspond exactly to the Euclidean sequences beginning with $n$ that contain $b$ and $a$ in consecutive positions. Write $b=a_t$ and $a=a_{t+1}$, for some $t \geq 0$.  Note that such a Euclidean sequence is completely determined past $b, a$ because it is simply the Euclidean sequence $\textbf{a}(b, a)$. Thus, each choice for $d$ corresponds to a choice for the Euclidean sequence between $n$ and $b$, which is in turn equivalent to choosing a sequence of quotients $q_1, \ldots, q_t$ within this range. Indeed, given such a sequence $q_1, \ldots, q_t$ with $t>0$, we can reconstruct the earlier terms of the Euclidean sequence by working backwards from $a=a_{t+1},b=a_t$ as follows:
\begin{align*}
    a_{t-1}&=q_tb+a\\
    a_{t-2}&=q_{t-1}(q_tb+a)+b\\
    \vdots \quad & \quad \quad \quad \vdots\\
    (d=)a_1&=q_2(\cdots)+(\cdots)\\
    (n=)a_0&=q_1(\cdots)+(\cdots).
\end{align*}
More explicitly,
$$a_i=X(q_{i+1},\ldots, q_t)b+X(q_{i+1}, \ldots, q_{t-1})a,$$
and, in particular, setting $i=0$ gives
\[n = X(q_1, \ldots, q_t)b + X(q_1, \ldots, q_{t - 1})a.\]
Note that $t = 0$ corresponds to the special case $n = b$ and $d = a$.
\\

Recall from Section~\ref{sec:basic-number-theory} that the map
\[\chi': (q_1, \ldots, q_t)\mapsto (X(q_1, \ldots, q_t), X(q_1, \ldots, q_{t - 1}))\]
is injective and that its range consists of pairs of relatively prime positive integers $(y,x)$, where $y \geq x$.  Thus, excluding the $t=0$ case, the number of choices for $d$ is at most than the number of ordered pairs $(y,x)\in\mathbb{N}^2$ such that
\[n = yb + xa,\]
subject to the additional constraints $\gcd(x, y) = 1$ and $y\ge x$.
\\

Taken together, all of the $t=0$ cases (which require $b=n$ and hence also $a \leq \frac{n}{M}$) contribute at most $\frac{n}{M}$ triples to $\mathcal{T}$.  So, putting everything together, we see that the quantity $\sum_{m = M}^n r(n, m)$ is at most $\frac{n}{M}$ greater than the size of the set
\[\{(a, b, x, y): 1\le a, b, x, y\le n; \frac{b}{a}\ge M; y\ge x; \gcd(a, b) = \gcd(x, y) = 1; n = yb + xa\}.\]
Note that the last condition is equivalent to
\begin{equation}\label{eq:linear}
(y - x)b + x(a + b) = n.
\end{equation}
We now bound the number of solutions to \eqref{eq:linear} (subject, of course, to the other conditions) by conditioning on the size of $b$.
\\

First, consider $b\le\sqrt{n\log n}$. We bound the number of solutions $(y,x)$ to \eqref{eq:linear} for each fixed pair $(a,b)$ with $a\le\frac{b}{M}$, then we sum over these values as $a$ and $b$ vary.  Fix a solution $(x_0,y_0)$ if one exists, and let $(x,y)$ be any solution.  Then $$(y_0-x_0)b+x_0(a+b)=n=(y-x)b+x(a+b),$$
so $$(x_0-x)(a+b)=(y-x-y_0+x_0)b.$$
Since $\gcd(b,a+b)=\gcd(b,a)=1$, we conclude that $x_0-x$ is divisible by $b$, i.e., $$x=x_0+bt$$ for some integer $t$.  Plugging in $x_0-x=-bt$ gives $$y-x=(y_0-x_0)-(a+b)t.$$  Since $0 \leq y-x <\frac{n}{b}$, there are at most $1+\frac{n}{b(a+b)}$ possible values for $t$.  Summing over $a$ and using $\frac{n}{b(a+b)}<\frac{n}{b^2}$, we get that there are at most $$\frac{b}{M}+\frac{n}{bM}$$ triples $(a,x,y)$ for each value of $b$.  Then summing over $b\leq \sqrt{n\log n}$ shows that there are at most
\[\frac{n\log n}{2M} + O\left(\frac{n\log\log n}{M}\right)\]
possible quadruples $(a,b,x,y)$ for $b$ in the desired range.
\\

Second, consider $b > \sqrt{n\log n}$. This time, we choose the pair $(y,x)$ first and then bound the number of possible pairs $(a, b)$. Fix any pair of relatively prime positive integers $(y, x)$ satisfying the above inequalities. We now count the possible pairs $(a, b)$.  The argument from the previous case shows that if $(a', b')$ is a solution to $yb+xa=n$, then every solution is expressible as $(a,b)=(a'+ys,b'-xs)$ for some integer $s$.  The inequality $\frac{b}{a}\ge M$ gives
\[0<(My + x)a\le yb + xa = n,\]
which implies that there are at most $$1+\frac{n}{y(My + x)}<1+\frac{n}{My^2}$$ possible values of $s$, i.e., solutions $(a,b)$.  We now sum over possible pairs $(y,x)$.  Note that \eqref{eq:linear}, in light of the bound on $b$, immediately gives both $y - x < \sqrt{\frac{n}{\log n}}$ and $x < \sqrt{\frac{n}{\log n}}$.  We compute that the number of solutions $(a,b,x,y)$ is bounded above by
\begin{align*}
    \sum_{x=1}^{\sqrt{\frac{n}{\log n}}}\sum_{y=x}^{x+\sqrt{\frac{n}{\log n}}}\left(1+\frac{n}{My^2} \right) &\leq \frac{n}{\log n}+ \frac{n}{M}\sum_{y=1}^{2\sqrt{\frac{n}{\log n}}}\sum_{x=1}^{y}\frac{1}{y^2}\\
     &\leq \frac{n}{\log n}+ \frac{n}{M}\sum_{y=1}^{2\sqrt{\frac{n}{\log n}}}\frac{1}{y}\\
     &=\frac{n}{\log n}+ \frac{n\log n}{2M}+O\left(\frac{n \log\log n}{M}\right).
\end{align*}
Combining the upper bounds from the above two cases for $b$ (and absorbing the $t=0$ case into the error term) gives a total of
\[\frac{n}{\log n}+ \frac{n\log n}{M} + O\left(\frac{n\log\log n}{M}\right).\]
Since $M\le K(\log n)^2$, this bound yields the desired result.
\end{proof}
We are ready to prove the main result of this section. The following is a restatement of Theorem~\ref{thm:number-theoretic} in the language of the previous subsection.

\begin{theorem}
There is an absolute constant $c$ such that the following holds: for every $n\ge 2$, there exists some $1\le d\le n - 1$ relatively prime to $n$ such that $s(n, d)\le c\frac{n}{\phi(n)}\log n\log\log n$.
\end{theorem}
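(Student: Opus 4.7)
The plan is to find some $d$ coprime to $n$ whose Euclidean sequence has no especially large quotient, and then bound its sum of quotients via averaging over the collection of all such $d$. Set $M = C_0(n/\phi(n))\log n$, where $C_0$ will be chosen appropriately large. Using the classical elementary bound $n/\phi(n) = O(\log\log n)$, we get $M = O(\log n\log\log n)$, which is at most $(\log n)^2$ for all $n$ sufficiently large, so that Lemma~\ref{lem:M-bound} applies uniformly to every threshold up to $M$. Finitely many small $n$ can be absorbed into the final constant $c$.

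Define the ``quotient-bounded'' set $D = \{d : 1\leq d\leq n-1,\ \gcd(n,d) = 1,\ \max_i q_i(n,d)\leq M\}$. Lemma~\ref{lem:M-bound} directly gives $|D^c|\leq R(n, M+1)\leq Cn\log n/M$, and by taking $C_0\geq 2C$ this is at most $\phi(n)/2$; hence $|D|\geq \phi(n)/2$.

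For the averaging step, write $s(n,d) = \sum_{k\geq 1}|\{i: q_i(n,d)\geq k\}|$. For every $d\in D$, all quotients are at most $M$, so this tail sum truncates at $M$, and summing over $d\in D$ then swapping orders gives
\[\sum_{d\in D}s(n,d) = \sum_{k=1}^M\sum_{d\in D}|\{i: q_i(n,d)\geq k\}| \leq \sum_{k=1}^M R(n,k).\]
Applying Lemma~\ref{lem:M-bound} to each term (valid since $k\leq M\leq (\log n)^2$) yields $R(n,k)\leq Cn\log n/k$, so the total is $O(n\log n\log M) = O(n\log n\log\log n)$, using $\log M = O(\log\log n + \log(n/\phi(n))) = O(\log\log n)$. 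Dividing by $|D|\geq\phi(n)/2$ produces some $d\in D$ with $s(n,d) = O((n/\phi(n))\log n\log\log n)$, proving the theorem.

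The main obstacle (and delicate point) is the dual role played by $M$: it must be large enough ($M\gtrsim (n/\phi(n))\log n$) that $D$ occupies a positive fraction of the residues coprime to $n$, yet small enough ($\log M = O(\log\log n)$) that the averaging over $D$ does not lose more than a $\log\log n$ factor. Both constraints are simultaneously satisfiable only because $n/\phi(n) = O(\log\log n)$; without this elementary bound the argument would collapse for highly composite $n$.
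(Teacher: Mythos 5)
Your proposal is correct and follows essentially the same route as the paper: choose the threshold $M\asymp\frac{n}{\phi(n)}\log n$ (using $n/\phi(n)=O(\log\log n)$ so that Lemma~\ref{lem:M-bound} applies), discard the at most $\phi(n)/2$ values of $d$ with a quotient exceeding $M$, and bound the average of $s(n,d)$ over the remaining $d$ by a harmonic sum of the tail counts, yielding $O(n\log n\log\log n)$ total. Your tail-sum (layer-cake) rewriting of $s(n,d)$ is exactly the summation-by-parts step $\sum_m m\,r(n,m)=\sum_t\sum_{m\ge t}r(n,m)$ in the paper's computation.
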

\begin{proof}
Fix some $K$ and $C(K)$ as in Lemma~\ref{lem:M-bound}. Note that $10C\frac{n}{\phi(n)}\log n < K(\log n)^2$ for sufficiently large $n$; this follows from, for instance, the well-known fact \cite{hardy1979introduction} that
\begin{equation}\label{eq:euler-mascheroni}
    \liminf_{n\rightarrow\infty}\frac{\phi(n)}{\frac{n}{\log\log n}} = e^{-\gamma},
\end{equation}
where $\gamma$ is the Euler-Mascheroni constant. We now restrict our attention to this case of sufficiently large $n$, since the constant in the statement of the theorem can be adjusted to accommodate small $n$.
\\

Setting $M=10C\frac{n}{\phi(n)}\log n$ in Lemma~\ref{lem:M-bound} gives
\[\sum_{m = 10C\frac{n}{\phi(n)}\log n}^n r(n, m)\le\frac{\phi(n)}{10}.\]
Let $\mathcal{D}$ be the set of all $d\in\{1, \ldots, n - 1\}$ such that $\gcd(n, d) = 1$ and the elements of $\textbf{q}(n, d)$ are all at most $10C\frac{n}{\phi(n)}\log n$. Then we see that $|\mathcal{D}|\ge 0.9\phi(n)$.
\\

We can now compute:
\begin{align*}
\sum_{d\in\mathcal{D}} s(n, d)&\le\sum_{m = 1}^{10C\frac{n}{\phi(n)}\log n} mr(n, m)\\
&=\sum_{t = 1}^{10C\frac{n}{\phi(n)}\log n}\sum_{m = t}^{10C\frac{n}{\phi(n)}\log n} r(n, m)\\
&\le\sum_{t = 1}^{10C\frac{n}{\phi(n)}\log n}\frac{Cn\log n}{t}\\
&= Cn\log n\log\log n + O\left(n\log n\log\left(10C\frac{n}{\phi(n)}\right)
\right)\\
&= Cn\log n\log\log n + O\left(n\log n\log \log \log n \right),
\end{align*}
where we used Lemma~\ref{lem:M-bound} in the third line and \eqref{eq:euler-mascheroni} in the fifth line. Since $|\mathcal{D}|\ge 0.9\phi(n)$, we also have
\begin{align*}\min_{d\in\mathcal{D}} s(n, d)&\le\frac{C}{0.9}\frac{n}{\phi(n)}\log n\log\log n + O\left(\frac{n}{\phi(n)}\log n\log\log\log n\right),
\end{align*}
and the result immediately follows.
\end{proof}

\section{Large linear extension numbers}\label{sec:large}

The previous three sections have been devoted to showing that $\LE(n)$ contains many small elements.  In this section, we show that $\LE(n)$ does not contain very many large elements.  We begin with a straightforward lemma about posets whose Hasse diagrams (viewed as graphs) are connected.

\begin{lemma}\label{lem:connected}
Let $P$ be an $n$-element poset that cannot be expressed as the disjoint sum of nonempty posets.  Then $\ext(P)\leq (n-1)!$.
\end{lemma}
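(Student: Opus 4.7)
My approach is to first reduce to the special case where the Hasse diagram is itself a tree, and then to handle that case by an inductive leaf-removal argument. For the reduction, since $P$ is connected, its Hasse diagram admits a spanning tree $T$. I would define $P_T$ to be the poset on the same ground set obtained by taking the transitive closure of the edges of $T$, oriented as they appear in $P$. Because $P_T$ has no more comparable pairs than $P$, every linear extension of $P$ is also a linear extension of $P_T$, which gives $\ext(P)\le\ext(P_T)$. Moreover, since $T$ is a tree (and so contains no cycles), no transitive shortcut can create a new covering relation, so the Hasse diagram of $P_T$ is exactly $T$.

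It then suffices to show $\ext(Q)\le(n-1)!$ for any $n$-element poset $Q$ whose Hasse diagram is a tree. I would argue by induction on $n$, with the base case $n=1$ trivial. For the inductive step with $n\ge 2$, let $v$ be a leaf of the tree and $u$ its unique neighbor; assume without loss of generality that $v<_Q u$ (the case $u<_Q v$ is symmetric). Then $v$ is a minimal element of $Q$ whose only direct constraint is $v<u$, and $Q\setminus\{v\}$ is again a tree poset on $n-1$ elements (with Hasse diagram $T\setminus\{v\}$), so by induction $\ext(Q\setminus\{v\})\le(n-2)!$.

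The argument concludes with a simple insertion count. Every linear extension of $Q$ restricts to an extension $\tau$ of $Q\setminus\{v\}$, and conversely the extensions of $Q$ restricting to $\tau$ correspond exactly to the insertions of $v$ at some position at or before the position of $u$ in $\tau$. Since $u$'s position in $\tau$ is at most $n-1$, there are at most $n-1$ such insertions per $\tau$, yielding $\ext(Q)\le(n-1)\cdot(n-2)!=(n-1)!$. The only subtle point worth verifying is that $Q\setminus\{v\}$ really has Hasse diagram $T\setminus\{v\}$ (i.e., no new covering relations appear after removing $v$), which follows because $v$ is a minimal element of $Q$ and therefore cannot lie strictly between any two other elements. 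I do not expect any serious obstacle beyond this verification.
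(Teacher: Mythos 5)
Your proposal is correct and takes essentially the same approach as the paper: pass to a spanning tree of the connected Hasse diagram, then induct by deleting a leaf and observing that each linear extension of the smaller poset admits at most $n-1$ valid insertion positions for the leaf. The paper phrases this last step as a probabilistic bound (the probability that $x<y$ in a random linear extension of $(P'\setminus\{x\})+\{x\}$ is at most $\frac{n-1}{n}$), but that is the same insertion count in different clothing.
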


\begin{proof}
Note that the Hasse diagram of $P$, viewed as a graph, is connected.  Choose a spanning tree of this graph, and delete all other edges; the remaining graph is the Hasse diagram of some poset $P'$.  Clearly, $\ext(P)\leq \ext(P')$, so if suffices to show that $\ext(P')\leq (n-1)!$.
\\

We now proceed by induction on $n$, where the base case $n=1$ is trivial.  Choose an element $x \in P'$ that is a leaf of its Hasse diagram, so that the Hasse diagram of $P' \setminus \{x\}$ is connected.  By hypothesis, $\ext(P' \setminus \{x\}) \leq (n-2)!$.  Let $Q=(P' \setminus \{x\})+\{x\}$ be obtained from $P'$ by ``severing'' the element $x$ from the rest of the poset.  Then $$\ext(Q)=\binom{n}{1} \ext(P' \setminus \{x\}) \ext(\{x\}) \leq n(n-2)!.$$

Without loss of generality, $P'=Q[x<y]$ for some $y \in Q$.  (The case where the added relation is $x>y$ is completely symmetrical.)  We claim that in a uniformly chosen random linear extension of $Q$, the probability that $x<y$ is at most $\frac{n-1}{n}$.  To see this, consider all linear extensions with a fixed relative ordering of the elements other than $x$.  Even if $y$ is the largest of these $n-1$ elements, there is always at least $1$ way to insert $x$ (out of $n$ total ways) so that $x>y$.  Thus, $$\ext(P')\leq \left( \frac{n-1}{n} \right) \ext(Q) \leq (n-1)!,$$ as desired.
\end{proof}

We remark that equality is achieved for the posets $\{x_1\} \oplus (\{x_2\}+\cdots+\{x_n\})$ and $(\{x_2\}+\cdots+\{x_n\}) \oplus \{x_1\}$, the direct sum of a singleton and an antichain of length $n-1$.  The reader can easily verify from the proof of the lemma that these are the only equality cases.
\\

We will also make use of the following more general statement, which reduces to Lemma \ref{lem:connected} when $m=0$.

\begin{lemma}\label{lem:leupperbound}
Let $P=P_1+P_2+\cdots+P_k$, where no $P_i$ can be expressed as the disjoint sum of two smaller posets, be an $n$-element poset.  Suppose $P_k$ is the largest component and $P_1=P_2=\cdots=P_m=\textbf{1}$ are the only singletons among $P_1, \ldots, P_{k-1}$.  Then $\ext(P)\leq \frac{n!}{n-m}$.
\end{lemma}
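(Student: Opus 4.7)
The plan is to combine the multiplicative disjoint-sum identity with Lemma~\ref{lem:connected}. Iterating the formula $\ext(P+Q)=\binom{|P|+|Q|}{|P|}\ext(P)\ext(Q)$ over the decomposition $P=P_1+\cdots+P_k$ gives
\[\ext(P)=\binom{n}{n_1,\ldots,n_k}\prod_{i=1}^{k}\ext(P_i),\]
where $n_i=|P_i|$. Each $P_i$, being by hypothesis indecomposable as a disjoint sum of nonempty posets, has a connected Hasse diagram, so Lemma~\ref{lem:connected} yields $\ext(P_i)\le (n_i-1)!$. Substituting this and simplifying the multinomial coefficient collapses the bound to
\[\ext(P)\le\frac{n!}{\prod_{i=1}^{k}n_i}.\]
It therefore suffices to establish the arithmetic inequality $\prod_{i=1}^{k}n_i\ge n-m$.

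Since $n_1=\cdots=n_m=1$ while $\sum_{i=m+1}^{k}n_i=n-m$, the desired inequality reduces to $\prod_{i=m+1}^{k}n_i\ge\sum_{i=m+1}^{k}n_i$. By hypothesis, $n_{m+1},\ldots,n_{k-1}\ge 2$, and since $P_k$ is the largest component, $n_k\ge n_{k-1}$, so $n_k\ge 2$ whenever $k\ge m+2$. When $k=m+1$ (a single non-singleton component, which includes the trivial antichain case where $n_k=1$), both sides equal $n_k=n-m$. When $k\ge m+2$, every factor $n_{m+1},\ldots,n_k$ is at least $2$, and I would invoke the elementary inequality $ab\ge a+b$ for integers $a,b\ge 2$ (equivalent to $(a-1)(b-1)\ge 1$) to absorb the factors one at a time: at each step the running product stays $\ge 2$, so multiplying by a further factor $\ge 2$ preserves the sum-bound.

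The argument is essentially bookkeeping once Lemma~\ref{lem:connected} is in hand, so there is no substantive obstacle. The only mild care required is checking the arithmetic inequality in the degenerate regimes $k=m+1$ (one large component) and $m=k-1$ with $n_k=1$ (pure antichain), which are in fact the equality cases of the lemma and confirm sharpness of the bound $n!/(n-m)$.
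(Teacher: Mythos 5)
Your proof is correct. It rests on the same two ingredients as the paper's own proof---Lemma~\ref{lem:connected} applied to each indecomposable component, and the elementary inequality $ab\ge a+b$ for integers $a,b\ge 2$---but it is organized differently: where the paper inducts on the number of non-singleton components, peeling off $P_k$ at each step via the two-term disjoint-sum identity and arriving at the bound $\frac{(n-m)!}{|P_k|(n-m-|P_k|)}$, you expand $\ext(P)$ in one shot with the multinomial coefficient and reduce everything to the purely arithmetic claim $\prod_{i=m+1}^{k}n_i\ge\sum_{i=m+1}^{k}n_i$. This buys you a cleaner and slightly stronger intermediate bound, $\ext(P)\le n!/\prod_{i=1}^{k}n_i$, and it separates the combinatorial input from the arithmetic; the paper's inductive phrasing costs a little bookkeeping but needs only the two-component sum formula. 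Your case analysis is complete: for $k=m+1$ (including the pure antichain, $n_k=1$) product and sum coincide, and for $k\ge m+2$ every remaining factor is at least $2$, so the iterated absorption via $ab\ge a+b$ goes through. One small caveat on your closing remark: these regimes are where the \emph{arithmetic} inequality is tight, but equality in the lemma's bound additionally requires $\ext(P_k)=(n_k-1)!$, which by the remark following Lemma~\ref{lem:connected} holds only for the singleton-plus-antichain direct sums, not for an arbitrary connected $P_k$.
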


\begin{proof}
We will prove that the $(n-m)$-element poset $Q=P_{m+1}+\cdots+P_k$ satisfies $\ext(Q) \leq (n-m-1)!$, then the statement follows from $$\ext(P)=(n-m+1)(n-m+2)\cdots (n) \ext(Q)\leq \frac{n!}{n-m}.$$

We proceed by induction on $k-m$, where the base case $k-m=1$ is Lemma \ref{lem:connected}.  For the induction step, we compute 
\begin{align*}
\ext(Q) &=\binom{n-m}{|P_k|} \ext(P_{m+1}+\cdots+P_{k-1}) \ext(P_k)\\
 &\leq \binom{n-m}{|P_k|} \left( \frac{(n-m-|P_k|)!}{n-m-|P_k|} \right) (|P_k|-1)!\\
 &=\frac{(n-m)!}{|P_k|(n-m-|P_k|)},
\end{align*}
where the second line used the induction hypothesis.
\\

If $|P_k|=1$, then it must be the case that $P$ is antichain and $m=n-1$, so the statement is true by direct computation.  Otherwise, both $|P_k|\geq 2$ and $n-m-|P_k| \geq 2$ (since $Q \setminus P_k$ still has at least one connected component, which by construction must contain at least $2$ elements).  Thus, $|P_k|(n-m-|P_k|) \geq |P_k|+(n-m-|P_k|)=n-m$ establishes the claim and completes the proof.
\end{proof}

We now use these lemmas to obtain restrictions on possible large linear extension numbers.  In particular, we show that for near the top of the range $[1,n!]$,  $\LE(n)$ looks like a dilation of the large values of some suitable $\LE(n-r)$.

\begin{theorem}\label{thm:big}
For integers $r<n$, we have
$$\LE(n) \cap \left(\frac{n!}{r+1}, n!\right]=\left\{\left( \frac{n!}{r!}\right) \ell: \ell \in \LE(r) \cap \left(\frac{r!}{r+1}, r!\right] \right\}$$
as an equality of sets.
\end{theorem}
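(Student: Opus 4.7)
The plan is to prove both set containments separately. The direction $\supseteq$ is immediate: given any $r$-element poset $Q$ with $\ext(Q) = \ell \in \LE(r) \cap (r!/(r+1), r!]$, I form the $n$-element poset $P = Q + \mathbf{1} + \cdots + \mathbf{1}$ by adjoining $n - r$ singleton components to $Q$. Iterating the disjoint-sum formula from Section~\ref{sec:general-constructions} gives $\ext(P) = \frac{n!}{r!}\ell$, and the assumed bounds $r!/(r+1) < \ell \le r!$ translate directly to $n!/(r+1) < \ext(P) \le n!$.

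For the direction $\subseteq$, fix an $n$-element poset $P$ with $\ext(P) > n!/(r+1)$. Split the connected components of the Hasse diagram of $P$ into singletons and non-singletons, writing $P = Q + \mathbf{1} + \cdots + \mathbf{1}$ with $s$ singletons, where $Q$ is the disjoint sum of the non-singleton components (adopting the convention that $Q$ is empty with $\ext(Q) = 1$ if $P$ is an antichain). The disjoint-sum formula yields $\ext(P) = \frac{n!}{(n-s)!}\ext(Q)$. Since every component of $Q$ has size at least $2$, Lemma~\ref{lem:leupperbound} applied to $Q$ with its own singleton-count equal to zero gives $\ext(Q) \le (|Q|-1)!$ whenever $Q$ is nonempty, so $\ext(P) \le n!/(n-s)$. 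Combining this with the hypothesis $\ext(P) > n!/(r+1)$ forces $n - s \le r$, i.e., $r' := |Q| \le r$.

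It remains to exhibit the corresponding element of $\LE(r)$. Let $Q' = Q + \mathbf{1} + \cdots + \mathbf{1}$ be the $r$-element poset obtained by padding $Q$ with $r - r'$ extra singletons, and set $\ell := \ext(Q') = \frac{r!}{r'!}\ext(Q)$. A direct calculation then gives $\frac{n!}{r!}\ell = \frac{n!}{r'!}\ext(Q) = \ext(P)$, while the bounds $r!/(r+1) < \ell \le r!$ follow by unwinding the earlier inequalities. The only real obstacle is keeping the bookkeeping straight across two cases: $r' = 0$ (where $P$ is an antichain, $\ell = r!$, and $Q'$ is the $r$-element antichain) and $2 \le r' \le r$ (the value $r' = 1$ is automatically excluded since a size-one component of $Q$ would be a singleton). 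Once both cases are handled, the two containments combine to give the stated equality.
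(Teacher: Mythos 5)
Your proposal is correct and follows essentially the same route as the paper: both directions hinge on the reverse construction of padding with an antichain and, for the forward inclusion, on Lemma~\ref{lem:leupperbound} forcing at least $n-r$ singleton components when $\ext(P)>n!/(r+1)$. The only difference is cosmetic bookkeeping (you strip off all singletons and pad back up to $r$ elements, whereas the paper peels off exactly $n-r$ of them and leaves the rest inside $Q$), and your handling of the antichain and $r'=1$ edge cases is fine.
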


\begin{proof}
To see the first inclusion, let $P$ be any $n$-element poset satisfying $\ext(P)>\frac{n!}{r+1}$.  Decompose $P=P_1+P_2+\cdots+P_k$ and define $m$ as in Lemma \ref{lem:leupperbound}.  Then $\frac{n!}{n-m} \geq \ext(P) > \frac{n!}{r+1}$ implies that $r+1> n-m$ and  $m \geq n-r$.  Note that $Q=P_{n-r+1}+P_{n-r+2}+\cdots+P_k$ is an $r$-element poset.  Finally, we have $$\ext(P)=(r+1)(r+2)\cdots (n) \ext(Q)=\left( \frac{n!}{r!} \right) \ext(Q),$$ as desired.
\\

To see the other inclusion, let $Q$ be any $r$-element poset.  Then let $P$ be the disjoint sum of $Q$ and an antichain of size $n-r$, so that $\ext(P)=\left( \frac{n!}{r!} \right) \ext(Q)$, as above.
\end{proof}

This theorem tells us that the elements of $\LE(n)$ become sparser as one approaches the upper bound $n!$.  Note that the $r=2$ case recovers the obvious fact that the two largest elements of $\LE(n)$ are $n!$ and $\frac{n!}{2}$.  Similarly, the $r=3$ case tells us that the next largest element is $\frac{n!}{3}$, and so on.  The bound on $\LE(n) \cap ((n-1)!, n!]$ in Theorem~\ref{thm:large}, which we restate here as a corollary, follows immediately.

\begin{corollary}\label{cor:bigbound}
For all $n \geq 8$, we have $|\LE(n) \cap ((n-1)!,n!]|<(n-3)!$.
\end{corollary}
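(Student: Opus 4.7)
My strategy is to deduce the bound from Theorem~\ref{thm:big} by reducing to a question about $\LE(n-1)$. Applying Theorem~\ref{thm:big} with $r = n-1$ yields the bijection $\LE(n) \cap ((n-1)!, n!] = n \cdot \bigl(\LE(n-1) \cap ((n-1)!/n, (n-1)!]\bigr)$, so it suffices to show that $|\LE(n-1) \cap ((n-1)!/n, (n-1)!]| < (n-3)!$ for $n \geq 8$.

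To bound this cardinality, I would partition the interval $((n-1)!/n, (n-1)!]$ into the sub-intervals $((n-1)!/(k+1), (n-1)!/k]$ for $k = 1, 2, \ldots, n-1$. For each $k \leq n-2$, applying Theorem~\ref{thm:big} to $(n-1)$-element posets (with parameters $k$ and $k-1$, and subtracting) yields
\[
|\LE(n-1) \cap ((n-1)!/(k+1), (n-1)!/k]| = |\LE(k) \cap (k!/(k+1), (k-1)!]|,
\]
which I would estimate by the number of integers in the sub-interval, namely at most $(k-1)!/(k+1) + 1$. The residual sub-interval corresponding to $k = n-1$, namely $((n-1)!/n, (n-2)!]$, lies outside the hypothesis of Theorem~\ref{thm:big} (which requires $r < n-1$); I would bound its contribution via Lemma~\ref{lem:leupperbound}, which forces every $(n-1)$-element poset $Q$ with $\ext(Q)$ in this range to have at least one singleton component, leading to a recursive reduction of the form $|\LE(n-1) \cap ((n-1)!/n, (n-2)!]| \leq |\LE(n-2) \cap ((n-2)!/n, (n-2)!]| + (\text{connected contribution})$, where the connected contribution is controlled by Lemma~\ref{lem:connected}.

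The main obstacle will be the arithmetic verification that all these contributions sum to something strictly below $(n-3)!$ for every $n \geq 8$. The crude integer-count estimates on the narrow sub-intervals already sum to a quantity comparable in magnitude to $(n-3)!$, so the argument must exploit the fact that $\LE(k)$ is considerably sparser than the trivial bound $(k-1)!$ supplied by the lemmas, particularly for the dominant terms with $k$ near $n-1$. I expect the small cases $n = 8, 9, 10$ to require hand (or computer-assisted) verification, with the asymptotic regime following from the observation that the dominant $k = n-2$ contribution is itself bounded by $|\LE(n-2)|$ recursively and the geometric decay of the tail terms in the harmonic partition.
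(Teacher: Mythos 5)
Your skeleton is essentially the paper's argument in unrolled form: the paper sets $N(n)=|\LE(n)\cap((n-1)!,n!]|$, applies Theorem~\ref{thm:big} with $r=n-1$, splits $((n-1)!/n,(n-1)!]$ into $((n-1)!/n,(n-2)!]$ and $((n-2)!,(n-1)!]$, bounds the first piece by its integer count $\lceil(n-2)!/n\rceil$, and recurses on the second (which is $N(n-1)$ by definition); telescoping that recursion reproduces exactly your sub-interval partition and your per-interval integer counts. So the approach is fine. The problem is that you stop before the part of the proof that actually carries the content, and the two things you say about how to finish are both wrong.

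First, your handling of the residual interval $((n-1)!/n,(n-2)!]$ rests on a false claim. Lemma~\ref{lem:leupperbound} applied to an $(n-1)$-element poset $Q$ with $\ext(Q)>(n-1)!/n$ gives $(n-1)!/(n-1-m)>(n-1)!/n$, i.e.\ $m\ge 0$, which is vacuous; it does not force a singleton component. Indeed the connected poset $\{x_1\}\oplus(\{x_2\}+\cdots+\{x_{n-1}\})$ has $(n-2)!$ linear extensions, which lies in this interval, and connected $(n-1)$-element posets can in principle realize any value up to $(n-2)!$, so Lemma~\ref{lem:connected} gives no bound on the ``connected contribution'' beyond the raw integer count of the interval. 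Second, your diagnosis of the arithmetic is backwards: you assert the crude integer counts sum to something ``comparable in magnitude to $(n-3)!$'' and that one must exploit sparseness of $\LE(k)$. In fact the crude counts suffice and no sparseness is needed --- this is precisely the paper's proof. The residual interval contributes at most $\lceil(n-2)!/n\rceil=(n-3)!\cdot\frac{n-2}{n}+O(1)$, the next sub-interval at most about $(n-3)!/(n-1)$, and the remaining terms decay factorially, so the total is $(n-3)!\left(1-\frac{n-2}{n(n-1)}\right)+O((n-4)!)<(n-3)!$; concretely, at $n=8$ the counts are $90+18+4+2+1+1+1=117<5!$, and the inductive step $\lceil(n-2)!/n\rceil+(n-4)!<(n-3)!$ closes the argument for $n\ge 9$. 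Carrying out this verification (base case plus induction) is the entire remaining substance of the corollary, and your proposal leaves it open while pointing toward a repair that would not work.
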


\begin{proof}
Let $N(n)=|\LE(n) \cap ((n-1)!,n!]|$.  For any $n \geq 2$, the $r=n-1$ case of Theorem \ref{thm:big} gives:
\begin{align*}
N(n) &=\left|\LE(n-1) \cap \left(\frac{(n-1)!}{n},(n-1)!\right]\right|\\
 &=\left|\LE(n-1) \cap \left(\frac{(n-1)!}{n},(n-2)!\right]\right|+|\LE(n-1) \cap ((n-2)!,(n-1)!]|\\
 &\leq \left\lceil (n-2)!-\frac{(n-1)!}{n} \right\rceil +N(n-1)\\
 &=\left\lceil \frac{(n-2)!}{n} \right\rceil +N(n-1).
\end{align*}
This inequality, together with $N(2)=1$, lets us compute  $N(3)\leq 2$, $N(4) \leq 3$, $N(5)\leq 5$, $N(6) \leq 9$, $N(7) \leq 27$, $N(8) \leq 117<5!$.  The corollary then follows (by induction) from the observation that for all $n \geq 9$, we have
\begin{align*}
\left\lceil \frac{(n-2)!}{n} \right\rceil +(n-4)! &< \frac{(n-2)!}{n}+1+(n-4)!\\
 &=(n-3)! \left\lbrack \frac{n-2}{n}+\frac{1}{(n-3)!}+\frac{1}{n-3} \right\rbrack\\
  &<(n-3)! \left\lbrack 1-\frac{2}{n}+\frac{1}{2n} +\frac{3}{2n} \right\rbrack\\
 &<(n-3)!.
\end{align*}
\end{proof}

\section{Conclusion}\label{sec:conclusion}

Theorems~\ref{thm:small} and \ref{thm:big} show a sense in which $\LE(n)$ contains small values ``more'' than it contains very large values.  We have, however, barely scratched the surface.  We present the following list of questions, problems, and conjectures about the structure of $\LE(n)$.

\begin{question}
Let $M(n)$ denote the smallest positive integer that is not in $\LE(n)$. We have shown that $M(n)$ grows as at least $\exp\left(O\left(\frac{n}{\log n}\right)\right)$. How much can this growth rate be improved?  What upper bounds can be established?
\end{question}
Note that any improvement to Theorem~\ref{thm:number-theoretic} immediately translates into a better lower bound on $M(n)$. In particular, we conjecture that Theorem~\ref{thm:number-theoretic} can be improved to what is clearly the best possible.
\begin{conjecture}\label{conj:euclidean}
There is an absolute constant $c$ such that the following holds: for every $n\ge 2$, there exists some $1\le d\le n - 1$ relatively prime to $n$ such that $s(n,d)\leq c\log n$.
\end{conjecture}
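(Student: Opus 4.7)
The target $O(\log n)$ is clearly optimal, since for any sequence $(q_1,\ldots,q_t)$ we have $n = X(q_1,\ldots,q_t) \le \prod_i (q_i+1) \le \exp\bigl(\sum_i q_i\bigr)$, giving $s(n,d) \ge \log n$ for every $d$ coprime to $n$. The conjecture thus asks for a matching upper bound on $\min_d s(n,d)$.

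My first line of attack would be to tighten the proof of Theorem~\ref{thm:number-theoretic}. Both of the spurious factors $\tfrac{n}{\phi(n)}$ and $\log\log n$ enter through Lemma~\ref{lem:M-bound}, whose bound $\sum_{m\ge M} r(n,m) \le \tfrac{Cn\log n}{M}$ carries a stray factor of $\log n$. That factor arises in both of the regimes analyzed in the proof: the small-$b$ regime contributes $\tfrac{n\log n}{2M}$ via $\sum_{b\le \sqrt{n\log n}} n/b^2$, and the large-$b$ regime contributes the same via $\sum_y 1/y$. In both places, the counting discards the coprimality conditions on the quadruple $(a,b,x,y)$; a M\"obius inversion against $\gcd(a,b)=\gcd(x,y)=1$ should recover a logarithm's worth of savings. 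The $\tfrac{n}{\phi(n)}$ factor is more stubborn, reflecting the fact that the threshold $M$ in the definition of $\mathcal D$ must be inflated to guarantee $|\mathcal D| \ge 0.9\phi(n)$. One way around this is to restrict attention to $d$'s in a single residue class modulo the radical of $n$, where coprimality is automatic, and to average only over that smaller set.

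Unfortunately, averaging cannot close the full gap, and this is the main obstacle. A classical result of Heilbronn shows that the typical coprime $d$ satisfies $s(n,d) \asymp (\log n)^2$, so the $d$'s meeting the conjecture form a genuinely sparse subset of $\{1,\ldots,n-1\}$, and any first-moment argument falls short of the target by a factor of $\log n$. The conjecture is really an existence statement and likely requires a constructive proof. The most natural construction is greedy: build a Euclidean-style sequence $n = a_0 > a_1 > \cdots > a_\ell = 1$ in which each ratio $a_{k-1}/a_k$ is at most a fixed constant $Q$, by choosing each $a_k$ near $a_{k-1}/2$ subject to $\gcd(a_{k-1},a_k) = 1$; then $d = a_1$ would give $s(n,d) \le Q\ell = O(\log n)$. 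Making this rigorous reduces to showing that every positive integer arises as $X(q_1,\ldots,q_t)$ for some sequence with $q_i \le Q$ and $t = O(\log n)$, which via the correspondence of Section~\ref{sec:nice-family} can be reinterpreted as a coverage statement about a bounded-depth subtree of the Stern--Brocot tree. Proving this coverage seems to require a new combinatorial or sieve-theoretic input beyond what the current paper exploits, and I expect this to be the hardest step.
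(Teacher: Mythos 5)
This statement is Conjecture~\ref{conj:euclidean}: the paper does not prove it, and neither do you. Your proposal is a research plan, not a proof, and you say so yourself (``I expect this to be the hardest step''). So the verdict is simply that there is a gap, and the gap is the entire argument: no step of the proposal establishes the existence of a $d$ with $s(n,d)\le c\log n$. Two of your intermediate claims also need repair. First, the greedy construction as stated is not well-defined: a Euclidean sequence is determined by its first two terms, since $a_{k+1}=a_{k-1}\bmod a_k$ is forced, so you cannot independently ``choose each $a_k$ near $a_{k-1}/2$ subject to coprimality.'' The construction only makes sense run from the bottom up, i.e., as the coverage statement you state at the end. Second, the suggestion that M\"obius inversion over the coprimality conditions recovers a factor of $\log n$ in Lemma~\ref{lem:M-bound} is unsubstantiated; the dominant term $\sum_{b}\frac{n}{bM}$ in the small-$b$ regime comes from counting lattice points on the line $yb+xa=n$ for each fixed $(a,b)$, and imposing $\gcd(a,b)=1$ thins the set of pairs $(a,b)$ but does not obviously shorten that harmonic sum.

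That said, several of your observations are correct and worth keeping. The lower bound $s(n,d)\ge\log n$ via $n=X(q_1,\ldots,q_t)\le\prod_i(q_i+1)\le\exp\left(\sum_i q_i\right)$ is a valid induction and confirms the conjecture is optimal up to the constant. And your final reformulation---that every $n$ should arise as $X(q_1,\ldots,q_t)$ with all $q_i\le Q$, whence $t=O(\log n)$ automatically since $X$ then grows at least like a Fibonacci sequence in $t$---is exactly Zaremba's conjecture on bounded partial quotients. Conjecture~\ref{conj:euclidean} is strictly weaker than Zaremba (bounded \emph{sum} rather than bounded \emph{maximum} of the quotients is not required; only $s(n,d)=O(\log n)$ is), but even the weaker statement appears open, and the density-one results of Bourgain and Kontorovich toward Zaremba are the natural existing tool. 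Naming this connection explicitly would make your discussion more useful than the vague appeal to ``a new combinatorial or sieve-theoretic input.''
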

Conjecture~\ref{conj:euclidean} implies the lower bound on our conjectured growth rate for $M(n)$.
\begin{conjecture}\label{conj:extensions}
$M(n) = \exp(\Omega(n))$.
\end{conjecture}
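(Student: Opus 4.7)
The plan is to derive Conjecture~\ref{conj:extensions} from Conjecture~\ref{conj:euclidean} via exactly the same direct-sum construction by which Theorem~\ref{thm:small} was derived from Theorem~\ref{thm:number-theoretic}. Assuming Conjecture~\ref{conj:euclidean}, for every prime $p$ there exists $d$ coprime to $p$ with $s(p, d) \leq c \log p$, so Proposition~\ref{prop:connection} yields a poset on at most $c \log p$ elements with exactly $p$ linear extensions. Given any $m \geq 2$ with prime factorization $m = p_1 \cdots p_r$ (with multiplicity), the direct sum of the corresponding posets has exactly $m$ linear extensions and size at most $\sum_{j=1}^{r} c \log p_j = c \log m$. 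Combined with the fact that $\LE(n) \subseteq \LE(n+1)$, every positive integer $m \leq \exp(n/c)$ lies in $\LE(n)$, giving $M(n) \geq \exp(n/c) = \exp(\Omega(n))$.

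It thus remains to prove Conjecture~\ref{conj:euclidean} itself. The cleanest sufficient condition is a Zaremba-type statement: there exists an absolute constant $K$ such that for every $n \geq 2$, some $d$ coprime to $n$ satisfies $\textbf{q}(n, d) = (q_1, \ldots, q_t)$ with all $q_i \leq K$. This suffices because $X(q_1, \ldots, q_t) \geq F_{t+1}$ (the $(t+1)$st Fibonacci number) whenever every $q_i \geq 1$, so $n = X(q_1, \ldots, q_t)$ forces $t = O(\log n)$, after which the bounded-quotient assumption gives $s(n, d) = \sum_i q_i \leq Kt = O(\log n)$. To attack this, I would adapt the Bourgain--Kontorovich framework: encode length-$\ell$ continued fraction patterns with entries in $\{1, \ldots, K\}$ as words in the semigroup $\Gamma_K \subset SL_2(\mathbb{Z})$ generated by the matrices $\begin{pmatrix} 0 & 1 \\ 1 & k \end{pmatrix}$ for $k = 1, \ldots, K$, and invoke the spectral gap / expander properties of $\Gamma_K$ (valid for $K$ sufficiently large) via an affine sieve in order to produce, for each target $n$, a word $w \in \Gamma_K$ whose arithmetic data pins down a suitable $d$.

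The main obstacle is that the statement just proposed is precisely Zaremba's conjecture, a famous and longstanding open problem. Bourgain, Kontorovich, and subsequent authors have shown that Zaremba holds for a set of $n$ of density one, with the exceptional set being increasingly sparse in $[1, N]$, but handling every $n$ remains out of reach. A weaker form, asking only for the sum-of-quotients bound $s(n, d) = O(\log n)$ rather than the pointwise bounded-quotient statement, would still suffice for Conjecture~\ref{conj:extensions}; however, the underlying character-sum and spectral-gap machinery seems inherently to leave a sparse exceptional set as well, so this relaxation may not dodge the core difficulty. An alternative route would be to abandon the Stern-Brocot / width-$2$ framework entirely and develop new poset-building operations — perhaps using the additive flexibility of the $(M, m)$-direct sum to represent primes through sums rather than through continued fraction data — but no such construction is currently apparent that achieves the full $\exp(\Omega(n))$ range in a size-efficient manner.
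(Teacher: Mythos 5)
The statement you are asked to prove is labeled as a conjecture in the paper, and the paper offers no proof of it; the only thing the paper asserts is the one-line remark that Conjecture~\ref{conj:euclidean} implies Conjecture~\ref{conj:extensions}. Your first paragraph supplies exactly that implication, and it is correct: assuming $s(p,d)\le c\log p$ for some $d$ coprime to each prime $p$, Proposition~\ref{prop:connection} gives a poset of size at most $c\log p$ with exactly $p$ linear extensions, the $\oplus$-construction multiplies linear extension numbers while adding sizes, and the ascending-chain property $\LE(n)\subseteq\LE(n+1)$ then yields $M(n)>\exp(n/c)$. (You only need Conjecture~\ref{conj:euclidean} for prime $n$, which is a nice observation.) So your reduction matches, and slightly elaborates, what the authors intended.

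The genuine gap is the one you yourself flag: the argument is conditional on Conjecture~\ref{conj:euclidean}, which is not established. Your proposed route to it --- a Zaremba-type statement with uniformly bounded partial quotients, attacked via the Bourgain--Kontorovich thin-semigroup/affine-sieve machinery --- is sound as a sufficient condition (the bound $X(q_1,\ldots,q_t)\ge F_{t+1}$ correctly forces $t=O(\log n)$, whence $s(n,d)\le Kt=O(\log n)$), but that machinery is only known to handle a density-one set of moduli $n$, leaving a sparse exceptional set, and handling every $n$ is a famous open problem. There is also a small mismatch worth noting: Zaremba's conjecture in its standard form does not require $d$ to be coprime to $n$ as a hypothesis but rather produces such a $d$; more importantly, even the weaker sum-of-quotients form (which is exactly Conjecture~\ref{conj:euclidean}) is not known, and as you observe the spectral-gap methods do not obviously circumvent the exceptional set. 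So your submission is a correct conditional reduction plus an honest research program, not a proof; this is consistent with the statement's status in the paper as an open conjecture.
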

Recall that the posets used in the proof of Theorem~\ref{thm:small} are all width-$2$.  Inspired by this fact, we conjecture that width-$2$ posets asymptotically achieve $M(n)$.
\begin{conjecture}
Let $M_2(n)$ be the smallest positive integer that is not the linear extension number of a width-$2$ poset. Then $M_2(n) = \exp(\Omega(n))$.
\end{conjecture}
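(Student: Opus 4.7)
The plan is to reduce the conjecture to the number-theoretic Conjecture~\ref{conj:euclidean}. The key observation is that the proof of Theorem~\ref{thm:small} already produces width-$2$ posets: the direct sum $P\oplus Q$ preserves width (any antichain of $P\oplus Q$ must lie in $P$ or in $Q$), so the prime-factorization step of that proof, which iterates direct sums of the posets $P_b$ from Section~\ref{sec:nice-family}, stays within width-$2$. Consequently $M_2(n)\geq\exp(\Omega(n/\log n))$ unconditionally, and the remaining gap to $\exp(\Omega(n))$ comes entirely from the $\log\log n$ slack in Theorem~\ref{thm:number-theoretic}.

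Assuming Conjecture~\ref{conj:euclidean}, the cleanest route is to apply the conjecture to each $n\geq 2$ directly. Pick $d$ coprime to $n$ with $s(n,d)\leq c\log n$, invoke Proposition~\ref{prop:connection} to obtain a width-$2$ poset $P_b$ on at most $c\log n$ elements with $\ext(P_b)=n$, and then iterate the operation $P\mapsto P\oplus\mathbf{1}$ to inflate the ground set to exactly $N$ elements. This operation preserves the linear extension count, since $\ext(P\oplus\mathbf{1})=\ext(P)$, and it preserves width-$2$, since any antichain of $P\oplus\mathbf{1}$ either contains the new top element (hence has size~$1$) or lies in $P$. Therefore every positive integer $n\leq\exp(N/c)$ lies in $\{\ext(P):|P|=N,\ \mathrm{width}(P)\leq 2\}$, which gives $M_2(N)\geq\exp(\Omega(N))$.

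The main obstacle is Conjecture~\ref{conj:euclidean} itself. The averaging method underlying Lemma~\ref{lem:M-bound} cannot possibly deliver a logarithmic bound on $\min_d s(n,d)$: for generic $n$ the \emph{average} of $s(n,d)$ over coprime $d$ is already of order $\log n(\log\log n)^2$, so any argument that only controls averages is provably insufficient. A more promising angle is constructive---directly search for a continued-fraction representation $n=X(q_1,\ldots,q_t)$ with $\sum q_i=O(\log n)$, equivalently, locate $n$ at depth $O(\log n)$ as a denominator in the Stern--Brocot tree. This bound is tight along the Fibonacci sequence (witnessed by $d\approx n/\varphi$); handling arbitrary $n$ may require combining the multiplicative structure of $n$ with perturbation arguments on the quotients $q_i$, or else refining Lemma~\ref{lem:M-bound} by discarding a sparse set of obstructing continued-fraction types and extracting a specific good $d$ from the remainder. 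Any progress on this number-theoretic question would translate, via the mechanism above, into the desired lower bound $M_2(n)=\exp(\Omega(n))$.
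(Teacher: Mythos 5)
The statement you were asked to prove is stated in the paper as a conjecture with no proof, so there is nothing to compare your argument against except the authors' surrounding remarks. Your proposal does not close the conjecture either: everything hinges on Conjecture~\ref{conj:euclidean}, which is itself open (the paper proves only the weaker Theorem~\ref{thm:number-theoretic}, carrying the extra factor $\frac{n}{\phi(n)}\log\log n$). What you have actually established is, first, the unconditional bound $M_2(n)\geq\exp\left(\Omega\left(\frac{n}{\log n}\right)\right)$, which is correct: the posets $P_b$ of Section~\ref{sec:nice-family} have width $2$, and both the direct sum and padding by $\oplus\,\mathbf{1}$ preserve width $2$, so the entire proof of Theorem~\ref{thm:small} already lives inside width-$2$ posets. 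Second, you give a clean conditional implication: Conjecture~\ref{conj:euclidean} applied to $n$ itself (no prime factorization needed, since the bound $c\log n$ no longer degrades with $n/\phi(n)$) yields via Proposition~\ref{prop:connection} a width-$2$ poset on at most $c\log n$ elements with exactly $n$ linear extensions, hence $M_2(N)\geq\exp(N/c)$ after padding to $N$ elements. Both observations are sound and are essentially what the authors intend when they note that Conjecture~\ref{conj:euclidean} implies their conjectured growth rate for $M(n)$ and that the posets used for Theorem~\ref{thm:small} are all width $2$.

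The genuine gap is therefore Conjecture~\ref{conj:euclidean} itself: you correctly name it as the main obstacle but offer only heuristics toward it, so the proposal is a reduction, not a proof. One factual quibble in your discussion: the average of $s(n,d)$ over $d$ coprime to $n$ is of order $(\log n)^2$ (the Yao--Knuth asymptotic for sums of partial quotients), not $\log n(\log\log n)^2$; your qualitative point that averaging alone cannot reach $O(\log n)$ for the minimum still stands, but the stated order is off. If you want to record something verifiable, state your result as the conditional implication together with the unconditional bound $M_2(n)\geq\exp\left(\Omega\left(\frac{n}{\log n}\right)\right)$, and do not present the conjecture itself as resolved.
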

It is easy to show that there are $\exp(O(n))$ width-$2$ posets of size $n$, each with $\exp(O(n))$ linear extensions.  As such, this conjecture is the best possible for width-$2$ posets (or, in fact,  any constant-width posets).
\\

We conclude with two more general problems for future research.
\begin{problem}
\textup{Estimate $|\LE(n)|$ and other statistics.}
\end{problem}
\begin{problem}
\textup{For fixed $c$, estimate the largest $R_c(n)$ such that $|\LE(n) \cap [1,R]| \geq cR$. In particular, is $R_c(n)$ quite different from $M(n)$ in terms of its asymptotic growth rate?}
\end{problem}

\bibliographystyle{plain}
\bibliography{main}

\end{document}